\makeatletter\@addtoreset {equation}{section}\makeatother
\theoremstyle{plain}
\newtheorem{theorem}{Theorem}[section]
\newtheorem{lemma}[theorem]{Lemma}
\newtheorem{proposition}[theorem]{Proposition}
\newcommand{\Dom}{{\rm Dom}}
\renewcommand{\Im}{{\bf Im\,}}
\renewcommand{\Re}{{\bf Re\,}}
\def\CC{\mathbb C}
\def\codim{\rm codim\,}
\def\eps{\varepsilon}
\def \RR{{\mathbb R}}
\def\ZZ{\mathbb Z}
\begin{document}

\title{\bf On computing the instability
index of a non-selfadjoint differential operator
associated with coating and rimming flows}

\author{Almut Burchard, Marina Chugunova \\ {\small Department of
Mathematics, University of Toronto, Canada}}

\date{\today}
\maketitle

\begin{abstract}  We study the problem of finding the
instability index of certain non-selfadjoint fourth order
differential operators that appear as linearizations
of coating and rimming flows, where a thin layer
of fluid coats a horizontal rotating cylinder.
The main result reduces
the computation of the instability index to a finite-dimensional
space of trigonometric polynomials. The proof uses Lyapunov's
method to associate the differential operator with
a quadratic form, whose maximal positive subspace has
dimension equal to the instability
index.  The quadratic form is given by
a solution of Lyapunov's equation, which here takes the
form of a fourth order linear PDE in two variables.
Elliptic estimates for the solution of this PDE
play a key role.  We include some numerical examples.
\end{abstract}


\section{Introduction}

The stability of steady states is a basic question about
the dynamics of any partial differential equation
that models the evolution of a physical system.
Frequently, the first step is to linearize the system about
a given equilibrium. Linearized stability is
determined by the spectrum of the resulting differential operator $A$.
If $A$ has discrete spectrum, an important
quantity is the {\em instability index}, $\kappa(A)$, which
counts the number of eigenvalues in the right half plane (with multiplicity).

In order to numerically evaluate the instability
index of a given differential operator, its computation
should be reduced  to a problem of linear algebra.
Particularly for problems with
periodic boundary conditions, it seems natural
to restrict $A$ to a finite-dimensional space
of trigonometric polynomials.
Under what conditions can $\kappa(A)$ be computed from the
resulting finite matrix?  One difficulty is that the
entries of the infinite matrix
corresponding to the differential operator $A$ grow with
the row and column index, so that any truncation is not
a small perturbation.

If $A$ is a self-adjoint semi-bounded differential operator of even
order, then the computation of its instability index is
well-understood through the classical work of Morse \cite{Morse} who
solved this problem completely in the space of vector functions in
one independent variable. The instability index of $A$ agrees with
the dimension of the positive cone of the corresponding quadratic
form. It is invariant under congruence transformations that replace
$A$ with $T^*AT$.  The instability index can be estimated by
variational methods, or computed directly from the zeroes of the
corresponding Evans function.

Understanding the spectrum of a non-selfadjoint operator
is a much harder problem.
It is not at all obvious how to restrict the computation
of its instability index to a finite-dimensional subspace,
or how to even estimate its dimension.  Furthermore, the
numerical calculation of eigenvalues
can be extremely ill-conditioned even in finite dimensions.
One impressive example is the matrix
$$
A = \left(%
\begin{array}{ccc}
  10^4 + 1 & 10^6 & 10^4 \\
  10^6 & 2 & 10^6 \\
  -(10^4) &  -(10^6) & -(10^4-1) \\
\end{array}%
\right)
$$
The {\tt Matlab} function ${\tt eig}(A)$ gives for the eigenvalues
the numerical results $\lambda_1 = -0.8$,
$\lambda_{2/3}= 2.4 \pm  1.7\,i$,
which suggests an instability index of $\kappa(A)=2$.
However, the accuracy of the computation is poor.
Denoting by $V$ the matrix that
contains the (numerically computed)
eigenvectors in its columns, and by $E$ the diagonal matrix
that contains the (numerically computed) eigenvalues, then
$$
norm\, (A - VEV^{-1}) = 7.6\,.
$$
On the other hand, $A$ is similar to an upper triangular  matrix
$$
A = T \left(
\begin{array}{ccc}
  1 & 10^6 & 10^4 \\
  0 & 2 & 10^6 \\
  0 &  0 & 1
\end{array}
\right)T^{-1}\,,
\quad\mbox{where}\
T = \left(%
\begin{array}{ccc}
  1 & 0 & 0 \\
  0 & 1 & 0 \\
  1 & 0 & 1
\end{array}%
\right)\,,
$$
and we see that actually $\lambda_1=\lambda_2=1$,
$\lambda_3=2$ and $\kappa(A)=3$.
In contrast, the eigenvalues of the symmetric matrix
$$
B = \left(%
\begin{array}{ccc}
  10^4 + 1 & 10^6 & 10^4 \\
  10^6 & 2 & -(10^6) \\
  10^4 &  -(10^6) & -(10^4-1) \\
\end{array}%
\right)
$$
can be determined with the much better computational accuracy
$$
norm \, (B - VEV^{-1}) = 3.6 * 10^{-10}\,.
$$
Note that $B$ differs from $A$ only in the signs of two off-diagonal
entries.
The chance of encountering a matrix with moderately-sized
entries and a badly conditioned eigenvalue problem increases
rapidly with the dimension of the matrix (see \cite{Godunov, Trefethen}).
Such examples demonstrate that the stability problem
for a non-selfadjoint operator cannot be easily solved by direct
computations of the spectrum.

In this paper, we examine the computation of the
instability index for differential operators of the form
\begin{equation} \label{eq:def-A}
A [h] = -h{''''} - \bigl(a(x) h\bigr)'' + \bigl(b(x) h\bigr)' - c(x)h\,,
\end{equation}
acting on $2\pi$-periodic functions. Such operators appear as
linearizations of models for thin liquid films moving on the surface
of a horizontal rotating cylinder. The resulting flows are called
{\bf coating}, if the fluid is on the outside of the cylinder, and
{\bf rimming}, if the fluid is on the inside of a hollow cylinder.
They appear in many applications, including coating of fluorescent
light bulbs when a coating solvent is placed inside a spinning glass
tube, different type of moulding processes and paper productions.

One would expect the flow to become unstable, if the fluid film is
thick enough so that drops of fluid can form on the bottom of the
cylinder (in case of a coating flow) or on its ceiling  (in case of
a rimming flow). In both cases, surface tension and higher rotation
speeds should help to stabilize the fluid, but may also allow for
more complicated steady states.

The operators in Eq.(\ref{eq:def-A}) appear
as linearizations of the flows about steady states,
when the dependence on the longitudinal variable
in the cylinder is  neglected.
Benilov, O'Brien and Sazonov \cite{Benilov1} studied the
convection-diffusion equation
$$
 A[h] =  \frac{d}{dx} \Bigl(h + \eps \sin x \frac{dh}{dx}\Bigr)
$$
with periodic boundary conditions on $[0,2\pi]$,
which corresponds to a singular limit of a rimming flow where
surface tension is neglected.
This operator has remarkable properties:
For $|\eps|<2$, all its eigenmodes are neutrally stable,
but the Cauchy problem
$$
\frac{d}{dt} h = A [h],\  \ \ h(0)=h_0
$$
is ill-posed in any Sobolev and H\"older
space of $2\pi$-periodic
functions. The underlying cause is the sign change of the
diffusion coefficient as $x\to x+\pi$. This phenomenon
of explosive instability of a system with purely imaginary spectrum
was studied analytically by Chugunova, Karabash and Pyatkov
\cite{ChKP08Preprint}, who explained it in terms of the absence of
the Riesz basis property of the set of eigenfunctions. The spectral
and asymptotic properties of $A$
are of interest in operator theory and
were analyzed in \cite{D07,Weir1,PT-sym,ChugStr}.

One should expect the explosive instability to disappear in complete
models that includes the smoothing effect of surface tension. Such
models have been proposed, for example, by \cite{pukhnachev1,
pukhnachev2}. In \cite{Benilov2, Benilov3}, the  authors linearized
this model about some approximation of a positive steady state
solution to obtain
\begin{equation}
\label{eq:operator}
 A[h](x) = - \frac{d}{d x} \left\{(1 - \alpha_1 \, \cos x)h
+  \alpha_2 \, \sin x \frac{d h}{d x}  + \alpha_3 \,
      \left( \frac{ d h}{d x} + \frac{d^3 h}{d x^3} \right) \right\}
\end{equation}
with periodic boundary conditions.
Here, the parameter $\alpha_1$ is related to the
gravitational drainage, $\alpha_2$ is related to the hydrostatic
pressure (in lubrication approximation model this coefficient is
very small), and the parameter $\alpha_3$ describes surface tension
effect.  They showed numerically that a sufficiently strong surface tension
can stabilize the film provided that the other coefficients
are not too small.  For smaller values of $\alpha_1$ and $\alpha_2$,
capillary effects destabilize the film. The number of unstable
eigenvalues of $A$ grows if $\alpha_3$ is decreased.

We will consider  operators given by Eq.~(\ref{eq:def-A})
acting on $L^2[0,2\pi]$ with periodic boundary
conditions. We assume that the
coefficients $a(x)$, $b(x)$ and $c(x)$ are bounded smooth
periodic functions.  We will show
that the instability index of $A$ is determined by
its projection to a sufficiently large
finite-dimensional subspace of $L^2$.
The dimension of the space depends on  a suitable
norm of the distributional solution $U$
of the partial differential equation
\begin{equation}\label{eq:PDE-U}
{\cal A^*} U(x,y)= \delta_{y-x}\,
\end{equation}
with periodic boundary conditions on $[0,2\pi]\times [0,2\pi]$.
Here, the differential operator ${\cal A}$
is defined by applying the single-variable differential operator
$A$ to the functions $F(\cdot, y)$ and $F(x,\cdot)$ and
adding the results; symbolically
$$
{\cal A} = {\cal A}_x + {\cal A}_y\,.
$$
We note that Eq.~(\ref{eq:PDE-U}) has a unique solution
if the spectra of $A$ and $-A^*$ are disjoint~\cite{Belonosov1}.
Let $U_0(x,y)$ be the solution of Eq.~(\ref{eq:PDE-U})
with $a(x)=b(x)=0$ and $c(x)=1$.  We will see
below that $U_0$ is piecewise smooth, with a jump in the third
derivative across the line $x=y$, and that
$U(x,y)-U_0(x,y)\in {\cal H}^4$.

To describe our results,
denote by $P_N$ the standard projection onto the space of
trigonometric polynomials of order $N$,
\begin{equation} \label{eq:def-P}
P_N[\phi](x) = \sum_{|p|<N}\hat \phi(x) e^{ipx}\,.
\end{equation}
In Proposition~\ref{prop:U22} we show that
\begin{equation}\label{eq:main-1}
\kappa(A)= \kappa\bigl(P_N U^{-1}P_N\bigr)\,,
\end{equation}
provided that
$$
N^2 > 2 M \bigl(1+ ||U(x,y)-U_0(x,y)||_{{\cal H}^4}\bigr)\,.
$$
The constant is given by
\begin{equation}\label{eq:def-M}
M= 0.52\,\bigl( ||a||_{H^1} + ||b||_{H^1} + ||c-1||_{H^1}\bigr)\,.
\end{equation}
The significance of Eq.~(\ref{eq:main-1}) is that
it allows to compute the instability index of $A$
from the finite matrix that describes the restriction of $U$
to the finite-dimensional subspace
\begin{equation}\label{eq:U-complement}
\bigl(\mbox{Range}\,(I-P)\bigr)^{\perp_U}
= \mbox{Nullspace}\,(P_NU)\,.
\end{equation}
The weakness of this result is that both the
condition on $N$ and the computation of the subspace
involve the unknown function $U$, which is defined
as the solution of a partial differential equation.
The existence of such a solution,  and its norm,
depend sensitively on the spectrum of $A$, which
is exactly the unknown quantity we are concerned with.

It is tempting to consider instead the
matrix obtained by truncating
the Fourier representation of $A$ at a suitable
high order $N$.  Our main result,
Proposition~\ref{prop:main}, guarantees that
\begin{equation}\label{eq:main-2}
\kappa(A)= \kappa\bigl(P_N A P_N\bigr)\,.
\end{equation}
provided that
$$
N^2 > M(1+\sqrt{M+1}) \bigl(1+ ||U(x,y)-U_0(x,y)||_{{\cal H}^4}\bigr)\,.
$$
Note that only the norm of the unknown function
enters into the condition on $N$, and that
the identity in Eq.~(\ref{eq:main-2}) does not
involve $U$ at all.

The selection of $N$ and the problem of estimating this norm will be
discussed at the end.

Let us add a few words about the proofs.
Our analysis relies on the indefinite quadratic form
defined by the self-adjoint operator $U$.
Classical results, which will be discussed in the
next section, state that
$$
\kappa(A)=\kappa(U)\,,
$$
and that the positive and negative cones of $U$ contain the
invariant subspaces associated with
the spectrum of $A$ in the right and
left half planes, respectively.
The key to Eq.~(\ref{eq:main-1}) is that the quadratic
form is negative on high Fourier modes, because the
fourth order term in $A$ dominates the lower order
derivatives.  As part of the argument, we
derive an addition formula for the instability
index of a self-adjoint operator in terms
of its restriction to suitable subspace.
The proof of Eq.~(\ref{eq:main-2}) combines
Eq.~(\ref{eq:main-1}) with estimates
for the off-diagonal terms in the Fourier
representation for $U$.

One of the possible extensions of our results could be an
application of a similar method to obtain the estimations on the
size of the finite dimensional truncation in the case of a more
general forth order differential operator with the third order
derivative term which is absent in \ref{eq:operator}.

\section{Lyapunov's equation}
\label{sec:Lyap}

The partial differential equation~(\ref{eq:PDE-U})
is an instance of {\bf Lyapunov's equation}
\begin{equation} \label{eq:Lyapunov}
A^* U + U A = V\,,
\end{equation}
which was first considered
by Lyapunov in the case where
$A$ and $U$ are $n\times n$ matrices, and $V$ is
symmetric and positive definite.  (In Eq.~(\ref{eq:PDE-U}),
$V=I$.)
Assuming that a symmetric matrix $U$
solves Eq.~(\ref{eq:Lyapunov}), Lyapunov proved that
all eigenvalues of $A$ have
negative real part, if and only if $U$ is negative
definite. The follwoing generalization is due
to Taussky~\cite{Taussky}.

\begin{theorem}[Taussky]
\label{Taussky-theorem}
Let $A$ be an $n\times n$ complex matric with
characteristic roots $\alpha_i$, with
$\alpha_i+\bar\alpha_k\ne 0$ for all $i,k=1,\dots n$.
Then the unique solution $U$ of Lyapunov's equation with $V=I$
is nonsingular and satisfies $\kappa(U)=\kappa(A)$.
\end{theorem}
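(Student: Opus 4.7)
The plan is to solve Lyapunov's equation uniquely, show that its solution is Hermitian, and then evaluate the associated quadratic form on the $A$-invariant subspaces for the right and left open half-planes.

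First I would set up existence and uniqueness. The map $L\colon U\mapsto A^*U+UA$ is linear on $M_n(\CC)$, and a standard Kronecker-product calculation identifies its spectrum as $\{\bar\alpha_i+\alpha_j\}_{i,j=1}^n$. By hypothesis $0\notin\mathrm{spec}(L)$, so $L$ is invertible and the equation $L(U)=I$ has a unique solution $U$. Taking conjugate transposes gives $L(U^*)=I$, so by uniqueness $U=U^*$. Moreover, the case $i=k$ of the hypothesis reads $2\,\Re\alpha_i\neq 0$: every eigenvalue of $A$ lies strictly in one of the open half-planes. Let $E_+$ and $E_-$ denote the $A$-invariant generalized eigenspaces associated respectively with the eigenvalues of positive and negative real part, so that $\CC^n=E_+\oplus E_-$ and $\dim E_+=\kappa(A)$.

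The heart of the argument is the following derivative identity. For $x\in\CC^n$ set $f(t):=(Ue^{tA}x,e^{tA}x)$; using $A^*U+UA=I$,
$$
f'(t) \;=\; \bigl((A^*U+UA)e^{tA}x,\,e^{tA}x\bigr) \;=\; \|e^{tA}x\|^2.
$$
For $x\in E_-$, the vector $e^{tA}x$ decays exponentially as $t\to+\infty$, so integrating from $0$ to $+\infty$ gives $(Ux,x)=-\int_0^\infty\|e^{tA}x\|^2\,dt<0$ for $x\neq 0$. For $x\in E_+$, the vector $e^{tA}x$ decays exponentially as $t\to-\infty$, and the symmetric computation on $(-\infty,0]$ gives $(Ux,x)=\int_{-\infty}^0\|e^{tA}x\|^2\,dt>0$ for $x\neq 0$. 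Hence the Hermitian form $(U\,\cdot\,,\,\cdot\,)$ is positive definite on $E_+$ and negative definite on $E_-$.

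The inertia of $U$ then falls out: from $n_+(U)\geq\dim E_+$, $n_-(U)\geq\dim E_-$, and $n_+(U)+n_-(U)+n_0(U)=n=\dim E_++\dim E_-$, we conclude $n_0(U)=0$ (so $U$ is nonsingular) and $\kappa(U)=n_+(U)=\dim E_+=\kappa(A)$. The one point requiring care is that $E_+$ and $E_-$ need not be orthogonal with respect to the standard inner product, so I would not try to block-diagonalize $U$ along this splitting; instead I would invoke the maximal-subspace characterization $n_+(U)=\max\{\dim V:U|_V>0\}$, for which orthogonality is irrelevant. Beyond that, the only genuine \emph{idea} in the proof is the one-line derivative identity $f'(t)=\|e^{tA}x\|^2$, from which everything tumbles out.
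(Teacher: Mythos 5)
Your proof is correct and follows essentially the same route the paper itself sketches: the paper states Taussky's theorem without proof (citing the original), but immediately afterwards gives the monotonicity argument $\frac{d}{dt}Q(e^{tA}\phi)=\langle V e^{tA}\phi,e^{tA}\phi\rangle>0$ to conclude that $M_\pm(A)$ are maximal positive/negative cones, which is precisely your derivative identity $f'(t)=\|e^{tA}x\|^2$ and the subsequent integration over $[0,\infty)$ and $(-\infty,0]$. You additionally supply the Kronecker-product spectrum of $U\mapsto A^*U+UA$ and the $U=U^*$ symmetry step, which the paper takes for granted, and your maximal-subspace / inertia-count closing is the clean way to handle the non-orthogonality of $E_+$ and $E_-$.
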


The problem of
obtaining information about the sign of eigenvalues of
$A$ in situations where both $V$ and $U$ may be
indefinite and have non-trivial kernels remains an
area of active research.

Lyapunov's equation has many applications in stability theory
and optimal control.  In typical applications, $\kappa(A)=0$, so
that the system is asymptotically stable, and $U$ is used to
estimate the rate of convergence.  Eq.~(\ref{eq:Lyapunov})
is a special case of {\bf Sylvester's equation}
$$
AX - XB = C\,,
$$
which has been studied extensively in Linear Algebra,
Operator Theory, and Numerical Analysis.
It is known to be uniquely solvable, if and only if
the matrices $A$ and $B$ have no eigenvalues
in common.  In particular, Eq.~(\ref{eq:Lyapunov}) has a unique
solution if the spectra of $A$ and $-A^*$ are disjoint.
Since $V$ is self-adjoint, a unique solution $U$ is
automatically self-adjoint as well.  These results were extended to
bounded operators on infinite-dimensional
Hilbert spaces by Daleckii and Krein~\cite{Daleckii}
and to unbounded operators by Belonosov \cite{Belonosov1,
Belonosov2}.

Before stating Belonosov's result, we recall that a closed
densely defined operator $A$ on a Banach space is {\bf sectorial}, if
the spectrum of $A$ is contained in an open sector
$$
S= \bigl\{z\in \CC\ \,\big\vert \, |\arg(\lambda_0-z)| < \theta\bigr\}
$$
with vertex at $\lambda_0\in \RR$ and opening angle $\theta<\pi/2$,
and the resolvent $R_\lambda(A) = (A - \lambda I)^{-1}$
is uniformly bounded for $\lambda$ outside $S$.
Sectorial operators are precisely the generators
of analytic semigroups.  The sector $S$
is invariant under similarity transformations,
and does not change if the norm on the
space is replaced by a equivalent norm.

\begin{theorem}[Belonosov]
\label{belonosov-theorem} Let $A$ be a sectorial operator
on a separable Hilbert space $H$.  Assume that
$$
\sigma(A)\cap \sigma(-A^*)=\emptyset\,.
$$
Then for any bounded operator $V$ on $H$, the Lyapunov
equation~(\ref{eq:Lyapunov}) has a unique solution $U$ in the class
of bounded operators on $H$. Then $U$ is invertible in the general
sense, i.e. its inverse is densely defined but can be unbounded
operator
$$
\kappa(A)=\kappa(U)=\kappa(U^{-1})\,.
$$
\end{theorem}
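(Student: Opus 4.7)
The plan is to extend Taussky's finite-dimensional argument (Theorem~\ref{Taussky-theorem}) to the sectorial setting by splitting $H$ into the spectral subspaces of $A$ corresponding to the two half-planes. A first observation is that since $\sigma(-A^*)=-\overline{\sigma(A)}$, the disjointness hypothesis rules out any spectrum of $A$ on the imaginary axis: a point $i\beta\in\sigma(A)\cap i\RR$ would equal $-\overline{i\beta}\in\sigma(-A^*)$. Because $A$ is sectorial with vertex on $\RR$ and opening angle less than $\pi/2$, its spectrum lies in a sector opening to the left, so the part $\sigma_+$ in the open right half-plane is bounded and consists of finitely many eigenvalues of finite algebraic multiplicity. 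The associated Riesz projection $P_+$ splits $H=H_+\oplus H_-$ with $H_+$ finite-dimensional, and $A$ reduces to $A_\pm:=A|_{H_\pm}$; by definition $\kappa(A)=\dim H_+$.

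Next I would construct $U$ blockwise. On $H_-$ the operator $-A_-$ is sectorial with spectrum in the open left half-plane and generates an exponentially decaying analytic semigroup; on the finite-dimensional $H_+$ the analogous statement holds for $A_+$ itself. Decomposing $V$ into $2\times 2$ block form with respect to this splitting, the two diagonal Lyapunov equations admit the semigroup solutions
\begin{equation*}
U_{--}=-\int_0^\infty e^{tA_-^*}V_{--}e^{tA_-}\,dt,\qquad U_{++}=\int_0^\infty e^{-tA_+^*}V_{++}e^{-tA_+}\,dt,
\end{equation*}
convergent in operator norm thanks to sectorial resolvent bounds, while the off-diagonal Sylvester equations $A_\mp^*U_{\mp\pm}+U_{\mp\pm}A_\pm=V_{\mp\pm}$ are solved by Rosenblum's contour integral
\begin{equation*}
U_{\mp\pm}=\frac{1}{2\pi i}\oint_\Gamma R_\lambda(-A_\mp^*)\,V_{\mp\pm}\,R_\lambda(A_\pm)\,d\lambda,
\end{equation*}
with $\Gamma$ separating $\sigma(A_\pm)$ from $-\overline{\sigma(A_\mp)}$, whose existence is exactly the spectral hypothesis. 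Uniqueness is immediate: if $W$ is bounded and solves $A^*W+WA=0$, the same Rosenblum representation with zero right-hand side forces $W=0$.

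For the signature identity I would specialize to positive $V$ (as in the application $V=I$). The diagonal integrals then yield $U_{--}<0$ on $H_-$ and $U_{++}>0$ on $H_+$, and a block-triangular congruence $U\mapsto T^*UT$ chosen to annihilate the off-diagonal blocks preserves $\kappa(U)$ and produces an operator of signature $(\dim H_+,\dim H_-)$. Hence $\kappa(U)=\dim H_+=\kappa(A)$. Strict definiteness of the diagonal blocks makes $U$ injective with dense range, so $U^{-1}$ is densely defined (possibly unbounded), and $\kappa(U^{-1})=\kappa(U)$ because a self-adjoint operator and its inverse share their invariant definite subspaces.

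The main obstacle is the second step in the unbounded setting. Convergence of the semigroup integrals and the Rosenblum contour integral in operator norm has to be justified from the sectorial resolvent bounds, and the Riesz projection associated with $A$ is not orthogonal, so the block decompositions of $A$ and $A^*$ do not coincide and the block-algebraic manipulations above require care in tracking domains and adjoints. These are precisely the issues resolved by the sharp sectorial estimates in~\cite{Belonosov1,Belonosov2}.
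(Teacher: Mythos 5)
The paper does not prove Theorem~\ref{belonosov-theorem}; it cites it as a result of Belonosov~\cite{Belonosov1,Belonosov2} and immediately moves on, remarking that ``Belonosov actually proved more general existence and uniqueness results for the Sylvester's equation in Banach spaces.'' What the paper \emph{does} sketch, a few paragraphs later, is the mechanism behind the identity $\kappa(A)=\kappa(U)$: the quadratic form $Q(\phi)=\langle U\phi,\phi\rangle$ strictly increases along trajectories of $\dot\phi=A\phi$ because $\frac{d}{dt}Q=\langle V\phi,\phi\rangle>0$, so $M_+(A)$ (on which trajectories exist backward in time and decay as $t\to-\infty$) is a positive subspace and $M_-(A)$ is a negative one. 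That argument needs no block decomposition at all and produces the maximal positive/negative subspaces directly from the semigroup. Your proof takes a genuinely different route, closer in spirit to Taussky's finite-dimensional proof: split $H$ by the Riesz projection, solve the diagonal blocks by semigroup integrals and the off-diagonal ones by Rosenblum's contour integral, then compute the signature by a block-triangular congruence. This buys a more explicit construction of $U$ (useful if one also wants the existence/uniqueness part, which the dynamical sketch does not give), at the cost of having to control the non-orthogonal splitting.

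Two points deserve more care. First, ``finitely many eigenvalues of finite algebraic multiplicity'' does not follow from sectoriality alone; it follows in the paper's application because the resolvent is compact (established in Section~\ref{sec:basic}), but as stated the theorem only requires boundedness of the right-half-plane spectrum, and your argument in fact survives with $A_+$ merely bounded. Second, and more seriously, the gap you flag at the end is the crux rather than a technicality: since $P_+$ is a Riesz projection, $P_+^*\ne P_+$, so the decomposition that block-diagonalizes $A$ does \emph{not} block-diagonalize $A^*$. Writing ``$A_-^* U_{--} + U_{--} A_- = V_{--}$'' quietly identifies the adjoint of $A$ restricted to $H_-$ with the restriction of $A^*$ to $H_-$, which is false in the non-selfadjoint setting. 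One must either work with the dual splitting $H = P_+^* H \oplus (I-P_+^*)H$ for $A^*$ and carefully pair the two decompositions, or, as the paper's Lyapunov sketch does, avoid the block structure entirely by arguing directly with the flow. As a blind reconstruction the plan is reasonable, but in its current form the key algebraic step is not correct without repair.
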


Belonosov actually proved more general existence and uniqueness
results for the Sylvester's equation in Banach spaces.

To explain the geometric meaning of Lyapunov's equation, we
introduce on $H$ the indefinite inner product
\begin{equation}
\label{eq:innerproduct}
[\phi,\psi] = \langle U\phi, \psi\rangle\,.
\end{equation}
If $U$ has trivial nullspace and $\kappa(U)<\infty$,
then  $H$ equipped with $[\cdot,\cdot]$ is called a
{\bf Pontryagin space},
and will be denoted by $\Pi$.
The concepts of orthogonality and adjointness are
defined on $\Pi$ in the natural way with respect to the
indefinite inner product $[x,y]$.
A subspace $X\subset \Pi$ is called
{\bf positive} if $[f,f]>0$ for
for every non-zero vector $f \in X$,
and {\bf negative} if $[f,f] < 0$ for every non-zero
$f\in X$.
Maximal positive subspaces have dimension
$\kappa(U)$, while maximal negative subspaces have codimension
$\kappa(U)$.

Let $\phi(t)=e^{tA}\phi_0$ be a solution of the evolution equation
$$
\frac{d}{dt} \phi(t) = A\phi(t) \,,\quad \phi(0)=\phi_0\,.
$$
Lyapunov's equation guarantees that
the value of the quadratic form $Q(\phi)=[\phi,\phi]$
strictly increases with $t$,
$$
\frac{d}{dt} Q(\phi(t))
= \bigl\langle (A^*U + U A^*)\phi(t),\phi(t)\bigr\rangle
= \bigl\langle V \phi(t), \phi(t)\bigr\rangle > 0\,.
$$
Denote by $M_+(A)$ the invariant subspace
associated with the part of the
spectrum of $A$ located in the right half plane.
If $\phi\in M_+(A)$, then
$$
Q(\phi) > \lim_{t\to-\infty} Q\bigl(e^{tA}\phi\bigr)=0\,,
$$
which shows that
$M_+(A)$ is a positive subspace of $\Pi$.
A similar argument with $t\to \infty$ shows that
the complementary
subspace $M_-(A)$, which corresponds to the spectrum
of $A$ in the left half plane, is a negative
subspace of $\Pi$.  Since Eq.~(\ref{eq:Lyapunov})
excludes purely imaginary eigenvalues,
these subspaces are maximal, and
consequently $\kappa(A)=\kappa(U)$.

One can also interpret Lyapunov's equation as a dissipativity condition
on $A$ with respect to the Pontryagin space $\Pi$.
In general, a densely defined linear operator $A$ on $\Pi$
called {\bf dissipative } if $\Re[Af,f] \leq 0$ for all $f \in
\Dom(A)$. It is {\bf maximally dissipative} if it has
no proper dissipative extension in $\Pi$.
Assuming Lyapunov's equation, we compute for $\phi\ne 0$
$$
 \Re[A\phi,\phi] = \Re \langle U A \phi,\phi\rangle =
\frac{1}{2}\bigl\langle (A^*U+UA)\phi,\phi\bigr\rangle
=\frac{1}{2}\langle V\phi, \phi\rangle >0\,,
$$
i.e., $-A$ is dissipative.
In this framework, the analogue of Belonosov's theorem was
proven by Azizov~\cite{Azizov} (but note that Azizov
formulates the result in terms of $\Im$ rather than $\Re$):

\begin{theorem}[Azizov]
\label{azizov-theorem} Let $A$ be an operator
on $\Pi$ such that $-A$ is maximally dissipative.
Then there exist a maximal nonnegative subspace $\Pi_+$
and a maximal nonpositive subspace $\Pi_-$ of $\Pi$
such that
$$\Re \sigma(A\vert_{\Pi_+}) \geq 0\,,\quad \Re \sigma(A\vert_{\Pi_-}) \leq 0\,.
$$
Moreover, we can choose $\Pi_+$ and $\Pi_-$ to be
invariant subspaces for $A$, and
$$
\Pi_+\supset M_+(A)\,,\quad \Pi_- \supset M_-(A)\,.
$$

If, additionally, $\Re[A f, f] > 0$ for all
nonzero $f \in \Dom(A)$, then $M_+(A)$ and $M_-(A)$
are themselves maximal positive and negative subspaces for $\Pi$,
respectively, and
$$
M_+(A)\dot{+} M_-(A)=\Pi\,.
$$
\end{theorem}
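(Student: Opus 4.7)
The plan is to derive Azizov's result from the classical Pontryagin--Iokhvidov theory on existence of maximal semi-definite invariant subspaces for dissipative operators on a Pontryagin space, and then extract the spectral constraint from the dissipativity inequality on each piece separately. The hypotheses that $\Pi$ has finite rank of negativity $\kappa(U)$ and that $-A$ is maximally dissipative are precisely the ingredients needed to put the invariant subspace problem into a setting where compactness/fixed-point arguments apply.

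First I would construct $\Pi_+$ via the standard angular-operator parametrization. Fix a fundamental decomposition $\Pi=\Pi^{(0)}_+\oplus\Pi^{(0)}_-$ and represent any nonnegative subspace of $\Pi$ as the graph of a contraction $K:\Pi^{(0)}_+\to\Pi^{(0)}_-$; invariance under $A$ translates into a nonlinear Riccati-type fixed-point equation for $K$ on the operator unit ball. Passing to the Cayley transform of $-A$ (which is a contraction on $\Pi$ because $-A$ is maximally dissipative) turns this into a Schauder/Tychonoff fixed-point problem on a weakly compact convex set, producing a maximal nonnegative $A$-invariant $\Pi_+$. A dual argument applied to the Krein-space adjoint of $-A$ (which is well-defined under the maximality hypothesis) yields the maximal nonpositive invariant $\Pi_-$. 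The inclusion $\Pi_+\supset M_+(A)$ follows by first observing that $M_+(A)$ is itself nonnegative: this is precisely the calculation given in the text just before the theorem, namely $[\phi,\phi]>\lim_{t\to-\infty}[e^{tA}\phi,e^{tA}\phi]=0$ for nonzero $\phi\in M_+(A)$. Since $M_+(A)$ is $A$-invariant, it can be used to seed the fixed-point construction, so one obtains a maximal nonnegative invariant $\Pi_+$ containing it; the argument for $\Pi_-\supset M_-(A)$ is symmetric.

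For the spectral statement I would work with approximate eigenvectors. Given $\lambda\in\sigma(A\vert_{\Pi_+})$, choose a Weyl sequence $\phi_n\in\Pi_+$ with $\|\phi_n\|=1$ and $(A-\lambda)\phi_n\to 0$. From $\Re[A\phi_n,\phi_n]\geq 0$ together with $[\phi_n,\phi_n]\geq 0$ on $\Pi_+$, one passes to the limit and deduces $(\Re\lambda)[\phi_n,\phi_n]\to \Re[A\phi_n,\phi_n]\geq 0$, giving $\Re\lambda\geq 0$ on the strictly positive part; the isotropic part of $[\cdot,\cdot]\vert_{\Pi_+}$ is finite-dimensional and $A$-invariant, so its contribution to the spectrum consists of finitely many eigenvalues which lie on the imaginary axis and are consistent with the claimed inequality. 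The argument for $\sigma(A\vert_{\Pi_-})$ is obtained by reversing signs.

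Under the extra hypothesis $\Re[Af,f]>0$ for all nonzero $f\in\Dom(A)$, I would sharpen the preceding limit argument to show that $M_+(A)$ contains no isotropic vectors: a nonzero $\phi\in M_+(A)$ with $[\phi,\phi]=0$ would force the strictly increasing function $t\mapsto[e^{tA}\phi,e^{tA}\phi]$ to take the value $0$ at $t=0$ while tending to $0$ as $t\to-\infty$, a contradiction. Hence $M_+(A)$ is strictly positive, and symmetrically $M_-(A)$ is strictly negative, so $M_+(A)\cap M_-(A)=\{0\}$. A dimension count using that any maximal positive subspace has dimension $\kappa(U)$ and any maximal negative subspace has codimension $\kappa(U)$ then forces $M_+(A)\dot{+}M_-(A)=\Pi$ and the equalities $\Pi_\pm=M_\pm(A)$. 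The main obstacle I anticipate is the first step: verifying the hypotheses of the Pontryagin-type invariant subspace theorem for the possibly unbounded $-A$, in particular ensuring that the Cayley transform is well-defined and contractive on all of $\Pi$ and that the fixed-point problem inherits the required compactness from $\codim \Pi^{(0)}_+ = \kappa(U)<\infty$.
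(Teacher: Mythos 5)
The paper does not prove Theorem~\ref{azizov-theorem}; it is stated as an imported result, cited to Azizov and Iohvidov's monograph, and the text preceding it gives only the heuristic Lyapunov-function argument (monotonicity of $t\mapsto Q(e^{tA}\phi)$ showing $M_\pm(A)$ are semidefinite). So there is no paper proof to compare against, and what you have written is a sketch of the Krein--Langer--Azizov argument from the cited source. Your outline of the existence part is the right one: angular-operator parametrization, Cayley transform of $-A$, and a Schauder/Tychonoff fixed point exploiting $\kappa(U)<\infty$.

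That said, two steps in your sketch contain genuine gaps. First, the Weyl-sequence deduction of $\Re\sigma(A\vert_{\Pi_+})\ge 0$ is not complete: from $(\Re\lambda)[\phi_n,\phi_n]\to \Re[A\phi_n,\phi_n]\ge 0$ you can conclude $\Re\lambda\ge 0$ only if $[\phi_n,\phi_n]$ is bounded away from zero, and you dispose of the remaining case by asserting that the isotropic part of $\Pi_+$ is $A$-invariant with purely imaginary spectrum. Neither assertion is automatic; in fact in the Krein--Langer theory the spectral localization $\Re\sigma(A\vert_{\Pi_+})\ge 0$ is built into the construction of the invariant subspace (the fixed point is chosen with the correct spectral splitting), not deduced after the fact. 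Second, your final dimension count is circular: strict definiteness of $M_\pm(A)$ gives only $\dim M_+(A)\le\kappa(U)$ and $\codim M_-(A)\ge\kappa(U)$, and you invoke $M_+(A)\dot{+}M_-(A)=\Pi$ to upgrade these to equalities while simultaneously listing that direct-sum decomposition as a conclusion. The decomposition has to come from somewhere else --- e.g.\ from the absence of spectrum on the imaginary axis together with a Riesz spectral projection (available here because $A$ is sectorial with compact resolvent), or by showing directly that $\Pi_\pm$ from the first part must themselves be strictly definite, hence intersect trivially, hence span $\Pi$, and then that $\Pi_\pm\setminus M_\pm(A)=\emptyset$. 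As written, the last paragraph of your argument assumes what it sets out to prove.
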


The second part of Azizov's theorem
implies that $\kappa(A)=\kappa(U)$
provided that $V$ in Eq.~(\ref{eq:Lyapunov})  is positive definite.
This agrees with the conclusion of
Theorem~\ref{belonosov-theorem}, but note the difference
in the hypotheses: Belonosov's assumption that $A$ is sectorial
provides resolvent estimates  that allow
to represent $U$ as a contour integral (thereby proving existence),
and the analytic semigroup $e^{tA}$ appears
in the proof that $\kappa(A)=\kappa(U)$, as sketched above.
In contrast, Azizov's theorem
does not require $A$ to be sectorial, but starts instead from
a given solution to Eq.~(\ref{eq:Lyapunov}).
In the special case where $\kappa(U)=0$,
Theorem~\ref{azizov-theorem} reduces to
a theorem of Phillips that characterizes maximal dissipative
operators as generators of strongly continuous contraction
semigroups. In particular, the spectrum of
$A$ lies in the closed left half plane
(see ~\cite{Yosida}, Corollary 1 in Section IX.4).

In the case where $A$ is a sectorial differential operator
of even order on an interval $[a,b]$
Belonosov proved that the solution of
Lyapunov's equation with $V=I$
is given by a self-adjoint bounded operator~\cite{Belonosov3}.
His results are formulated for ``split'' boundary
conditions that do not couple the values at the two ends of the interval.
Belonosov's results were extended to
second-order sectorial differential operators with
non-split boundary conditions by Tersenov \cite{Tersenov}.
The operators we consider here are of fourth order with
periodic boundary conditions.

It is an interesting open question how to take advantage of the
freedom to choose an arbitrary positive definite self-adjoint bounded
operator $V$ for the right hand side of Eq.~(\ref{eq:Lyapunov}).
For instance, if $A$ is a sectorial non-selfadjoint differential
operator, can $V$ be chosen in such a way that the solution
$U$ is the inverse of a differential operator?

\section{Spaces and norms}
\label{sec:basic}

We start with some estimates
for the differential operator in Eq.~(\ref{eq:def-A}).
We will work in $L^2=L^2\bigl([0,2\pi]\bigr)$, and will use
periodic boundary conditions throughout. The inner product
and norm are denoted by
$$
\langle f,g\rangle=\int_0^{2\pi}f(x)\bar g(x)\,dx\,,\quad ||f||_{L^2}
=\left(\int_0^{2\pi}|f(x)|^2\,dx\right)^{1/2}\,.
$$
For the Fourier coefficients we use the conventions
$$
\hat f(p)=\frac{1}{2\pi}\int_0^{2\pi} f(x)e^{-ipx}\, dx\,,
\quad
f(x)=\sum_{p=-\infty}^\infty \hat f(p) e^{ipx}\,.
$$
In the hope of minimizing confusion,
we will denote functions on $[0,2\pi]$ by lowercase
letters such as  ($f$, $\phi$, \dots),
and functions on the square $[0,2\pi]\times [0,2\pi]$
by uppercase letters ($F$, $\Phi$, ....).
Abusing notation, we will identify a function $F(x,y)\in {\cal L}^2$
with the corresponding integral operator $F$ on $L^2$.
By Schwarz' inequality,
$$
|\langle F\phi, \psi\rangle| = \Bigl\vert \int_0^{2\pi}\int_0^{2\pi}
F(x,y)\phi(y)\bar\psi(x)\, dydx\Bigr\vert \le
||F(x,y)||_{{\cal L}^2}||\phi||_{L^2}||\psi||_{L^2}\,,
$$
and consequently
\begin{equation}\label{eq:Schwarz}
||F||_{L^2\to L^2} \le ||F(x,y)||_{{\cal L}^2}\,.
\end{equation}
Operators on functions of two variables will be denoted by
calligraphic letters $({\cal F}, {\cal G}, \dots)$.
Given a single-variable operator $F$, we denote
by ${\cal F}_x$ or ${\cal F}_y$
the operators that acts on the $x$- or $y$-variable
of a function $\Phi(x,y)$ while keeping the
other one fixed.

On the Sobolev spaces $H^s=H^s\bigl([0,2\pi]\bigr)$
with periodic boundary conditions, we use the norms
$$
||f||_{H^s}^2
= 2\pi \sum_{p=-\infty}^\infty (1+p^4)^{s/2} \, |\hat
f(p)|^2\,.
$$
The corresponding Sobolev spaces of doubly periodic functions
on $[0,2\pi]\times[0,2\pi]$ will be denoted by ${\cal H}^s$,
and their norms are defined by
$$
||F||_{{\cal H}^s}^2 =
4\pi^2\sum_{p,q=-\infty}^\infty (2+p^4+q^4)^{s/2} \,
|\hat F(p,q)|^2\,.
$$
Note that for $s=0$, this agrees with
the definition of the ${\cal L}^2$-norm as the square integral.
The choice of the Fourier multipliers $(1+p^4)^{s/2}$ and
$(2+p^4+q^4)^{s/2}$
in place of the standard $(1+p^2)^s$ and
$(1+p^2+q^2)^s$ allows for an easier
comparison between functions of one and two variables.
Finally, we denote by $D$ the unique positive definite
self-adjoint operator on $L^2$ such that
\begin{equation}\label{eq:def-D}
D^4 \phi = \phi'''' + \phi\,.
\end{equation}
This is a first-order pseudodifferential operator
that provides an isometry from $H^{s+1}$ onto $H^s$ for every value of $s$.

The domain of the operator $A$
in Eq.~(\ref{eq:operator}) consists of periodic
functions in $H^4[0,2\pi]$, and its adjoint is given by
$$
A^*[f]= -f^{''''} - a(x) f'' -b(x)f'-c(x) f\,.
$$
In particular, $A$ is self-adjoint, if $b(x)=a'(x)$.

\begin{lemma} \label{lem:Leibnitz}
For any $a\in H^1$ and every $\phi\in L^2$, we have
$$
||a\phi||_{L^2} \le 0.52\, ||a||_{H^1} ||\phi||_{L^2}\,.
$$
In particular,
$$||A^*+D^4||_{H^2\to L^2}\le M\,,
$$
where $M$ is the constant from Eq.~(\ref{eq:def-M}).
\end{lemma}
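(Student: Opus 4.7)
The plan is to establish the pointwise multiplier inequality first, via a Fourier-side Cauchy--Schwarz combined with the embedding $H^1\hookrightarrow L^\infty$ on the circle, and then to deduce the operator bound by decomposing $A^*+D^4$ into its three lower-order pieces. For the multiplier step I would use $||a\phi||_{L^2}\le ||a||_{L^\infty}||\phi||_{L^2}$ and control $||a||_{L^\infty}$ by writing $a(x)=\sum_p\hat a(p)e^{ipx}$ and splitting $|\hat a(p)|=(1+p^4)^{-1/4}\cdot(1+p^4)^{1/4}|\hat a(p)|$; Cauchy--Schwarz then gives
$$
|a(x)|\le \sum_p|\hat a(p)| \le \Bigl(\sum_{p\in\ZZ}(1+p^4)^{-1/2}\Bigr)^{1/2}\cdot \frac{1}{\sqrt{2\pi}}\,||a||_{H^1},
$$
so that $||a\phi||_{L^2}\le C\,||a||_{H^1}||\phi||_{L^2}$ with $C=\bigl[(2\pi)^{-1}\sum_p(1+p^4)^{-1/2}\bigr]^{1/2}$. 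The explicit numerical value is then obtained by evaluating the convergent series: summing the first $O(10)$ terms and bounding the tail by $2\sum_{|p|\ge N}p^{-2}$.

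For the operator bound, I would use the definitions of $A^*$ and $D^4$ to write $(A^*+D^4)f=-a(x)f''-b(x)f'-(c(x)-1)f$, apply the triangle inequality, and bound each term by the multiplier inequality just established, producing $||af''||_{L^2}\le 0.52\,||a||_{H^1}||f''||_{L^2}$ and similarly for the other two. The final ingredient is that, with the author's weight $(1+p^4)^{1/2}$ in the definition of $||\cdot||_{H^2}$, one has
$$
||f^{(k)}||_{L^2}^2 = 2\pi\sum_p p^{2k}|\hat f(p)|^2 \le 2\pi\sum_p(1+p^4)|\hat f(p)|^2 = ||f||_{H^2}^2
$$
for each $k\in\{0,1,2\}$, since $p^{2k}\le 1+p^4$ holds for all $p\in\ZZ$ and $k\le 2$. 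Adding the three resulting bounds gives exactly $||(A^*+D^4)f||_{L^2}\le M\,||f||_{H^2}$ with $M$ as in Eq.~(\ref{eq:def-M}).

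The main obstacle is pinning down the specific numerical value $0.52$. A direct application of Cauchy--Schwarz as above yields a constant of roughly $0.77$, so arriving at $0.52$ requires a sharper Fourier-side argument---natural candidates are to separate off the mean $\hat a(0)$ (which can be bounded by $(2\pi)^{-1/2}||a||_{L^2}$) before applying Cauchy--Schwarz to the oscillatory part, or to use Young's inequality $\ell^1\ast\ell^2\subset\ell^2$ on $\widehat{a\phi}=\hat a\ast\hat\phi$ together with an optimized weighted $\ell^1$-bound on $\hat a$. The remaining ingredients---the triangle inequality, the elementary inequality $p^{2k}\le 1+p^4$, and the numerical evaluation of a convergent series---are entirely routine.
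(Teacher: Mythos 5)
Your proposal follows exactly the paper's own argument: bound $||a\phi||_{L^2}$ by $||a||_{L^\infty}||\phi||_{L^2}$, control $||a||_{L^\infty}$ by writing $a$ as a Fourier series, applying Cauchy--Schwarz against the weight $(1+p^4)^{\pm 1/4}$, and invoking the paper's definition of $||\cdot||_{H^1}$; then get the operator bound by the triangle inequality on the three lower-order terms together with $p^{2k}\le 1+p^4$ for $k\in\{0,1,2\}$. All of that is correct and matches the paper.

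The concern you raise about the numerical value $0.52$ is in fact well founded, but the right conclusion is not that a sharper argument is needed --- it is that the paper's numerics are in error. The paper asserts $\sum_{p\in\ZZ}(1+p^4)^{-1/2}\approx 1.68$, yet the three terms $p=0,\pm 1$ alone already contribute $1+\sqrt2\approx 2.41$, and summing the series carefully gives approximately $3.6$. The constant is therefore $\bigl(\tfrac{1}{2\pi}\sum_p(1+p^4)^{-1/2}\bigr)^{1/2}\approx 0.77$, which is the number you computed, not $0.52$. Moreover the chain of inequalities you used (and the paper uses) is essentially sharp: take $\hat a(p)=(1+p^4)^{-1/2}$, so that $a(0)=\sum_p\hat a(p)=||a||_{L^\infty}$ and Cauchy--Schwarz is an equality. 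Consequently none of the sharpenings you float --- separating off $\hat a(0)$, or applying Young's inequality to $\hat a\ast\hat\phi$ --- can lower the constant to $0.52$; both in fact reproduce the same bound. The value $0.52$ in the lemma statement and in Eq.~(\ref{eq:def-M}) should be replaced by (roughly) $0.77$. This inflates $M$ by a fixed factor and hence the sufficient thresholds on $N$ in Propositions~\ref{prop:U22} and~\ref{prop:main}, but it does not affect the structure or validity of the subsequent arguments.
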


\begin{proof} Since
$$
\sum_{p=-\infty}^\infty (1+p^4)^{-1/2} \approx 1.68\,,
$$
we have, for $a\in H^1$,
$$
\sup_{||\phi||_{L^2}=1} ||a\phi||_{L^2}
\le ||a||_{L^\infty}
\le \Bigl(\frac{1}{2\pi} \sum_{p=-\infty}^\infty
   (1+p^4)^{-1/2}\Bigr)^{1/2} ||a||_{H^1}
\le 0.52\,||a||_{H^1}\,.
$$
For the second claim, we use that for $\phi\in H^2$
\begin{eqnarray*}
||(A^*+D^4)\phi||_{L^2}
&\le& ||a(x)\phi''||_{L^2} + ||b(x)\phi'||_{L^2} +||(c(x)-1)\phi||_{L^2}\\
&\le& ||a||_{L^\infty}||\phi||_{H^2} + ||b||_{L^\infty}||\phi||_{H^1}
 + ||c-1||_{L^\infty} ||\phi||_{L^2}\\
&\le& M ||\phi||_{H^2}\,.
\end{eqnarray*}
\end{proof}

\begin{lemma} \label{lem:sector} $A$ is sectorial.
\end{lemma}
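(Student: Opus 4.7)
The plan is to exhibit $A$ as a lower-order perturbation of the self-adjoint operator $-D^4$ and invoke the standard perturbation theorem for sectorial operators. Since sectoriality is preserved under adjoints (the spectrum of $A^*$ is the complex conjugate of $\sigma(A)$, and a sector with real vertex is symmetric about the real axis), it is equivalent, and more convenient given Lemma~\ref{lem:Leibnitz}, to prove that $A^*$ is sectorial.

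First I would observe that $-D^4$ is self-adjoint and non-positive: its Fourier symbol is $-(1+p^4)$, so $\sigma(-D^4) = \{-(1+p^4) : p \in \ZZ\} \subset (-\infty,-1]$. Consequently $-D^4$ is sectorial with vertex at $\lambda_0 = -1$ and arbitrarily small opening angle, and $H^4$ (with periodic boundary conditions) is its domain.

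Next I would set $B := A^* + D^4$, so that $A^* = -D^4 + B$ with $\Dom(A^*) = \Dom(-D^4) = H^4$. By the second assertion of Lemma~\ref{lem:Leibnitz}, $\|B\phi\|_{L^2} \le M\,\|\phi\|_{H^2}$ for every $\phi \in H^2$. To turn this into a relative bound with respect to $-D^4$, I would use the identity $\|\phi\|_{H^2}^2 = \|D^2\phi\|_{L^2}^2 = \langle D^4\phi,\phi\rangle$ together with Cauchy--Schwarz and Young's inequality to get, for any $\eps > 0$,
$$
\|B\phi\|_{L^2}
\;\le\; M\bigl(\|D^4\phi\|_{L^2}\|\phi\|_{L^2}\bigr)^{1/2}
\;\le\; \eps\,\|D^4\phi\|_{L^2} + \frac{M^2}{4\eps}\,\|\phi\|_{L^2}.
$$
In particular the relative bound can be taken arbitrarily small, so the standard perturbation theorem for sectorial operators (e.g.\ Kato, or Pazy Ch.~3 Thm.~2.1) gives that $A^* = -D^4 + B$ is sectorial on $L^2$ with domain $H^4$, and hence so is $A$.

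The main obstacle I expect is simply being careful about the form of the perturbation theorem cited: we need both the spectral inclusion in an enlarged sector and the accompanying uniform resolvent estimate outside that sector. Both follow from the relative bound above (together with the analogous bound with $D^4\phi$ replaced by $(\lambda + D^4)\phi$ for $\lambda$ outside a small neighborhood of $(-\infty,-1]$), essentially by writing $(\lambda - A^*)^{-1} = (\lambda + D^4)^{-1}(I + B(\lambda + D^4)^{-1})^{-1}$ on the appropriate half-plane and taking $\eps$ small enough that the Neumann series converges. No genuinely new estimate is required beyond Lemma~\ref{lem:Leibnitz}.
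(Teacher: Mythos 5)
Your argument is correct, but it takes a genuinely different route from the paper. The paper does not appeal to abstract perturbation theory at all: instead it directly bounds the numerical range (Hausdorff set) of $A$, choosing an explicit vertex $\lambda_0$ and half-angle $\theta = \tan^{-1}\bigl(\max_x|a'(x)-b(x)|\bigr)$, proving the coercivity estimate $\Re\langle f,(\lambda_0-A)f\rangle \ge \tfrac12\|f\|_{H^2}^2$ and the companion bound on $|\Im\langle f,(\lambda_0-A)f\rangle|$, and then invoking Kato's numerical-range characterization of $m$-sectorial operators. Your approach writes $A^* = -D^4 + B$ with $B = A^* + D^4$, uses the second half of Lemma~\ref{lem:Leibnitz} together with the interpolation $\|\phi\|_{H^2}\le(\|D^4\phi\|\,\|\phi\|)^{1/2}$ and Young's inequality to show $B$ is $D^4$-bounded with relative bound zero, and then cites the perturbation theorem for generators of analytic semigroups; the passage from $A^*$ to $A$ via $\sigma(A^*)=\overline{\sigma(A)}$ and $\|(A^*-\bar\lambda)^{-1}\|=\|(A-\lambda)^{-1}\|$ is sound. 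What your route buys is brevity and generality: it needs no explicit constants and would apply verbatim to any lower-order perturbation. What the paper's route buys is the explicit coercivity estimate (Eq.~(\ref{eq:Re})) and the specific value of $\lambda_0$, both of which are reused later (in particular in the Lax--Milgram argument of Lemma~\ref{lem:Lax-Milgram}); your proof establishes Lemma~\ref{lem:sector} itself but would leave (\ref{eq:Re}) to be derived separately. (One harmless slip: the resolvent factorization should read $(\lambda-A^*)^{-1}=(\lambda+D^4)^{-1}\bigl(I-B(\lambda+D^4)^{-1}\bigr)^{-1}$, with a minus sign, but this does not affect the Neumann-series argument.)
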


\begin{proof} It suffices to show that the {\bf Hausdorff set}
$\bigl \{\langle f, Af \rangle \ \mid\ f\in \Dom(A), ||f||=1  \bigr\}$
is contained in a closed sector
$$
S = \{\lambda_0\} \cup \{ z \in C: |\arg (\lambda_0-z)| \leq \theta\} $$
with some vertex $\lambda_0$ and opening angle $\theta<\pi$,
and that $A-\lambda_0 I$ is invertible (see p. 280 of
\cite{Kato}).

Choose
\begin{eqnarray*}
\lambda_0 &=& \frac{1}{2} \bigl(1+\max_{x}\{-a''(x)+b'(x)-c(x)\}
+ (\max_x [a(x)]_+)^2 \bigr) \\
\theta &=& \tan^{-1} \bigl(\max_x |a'(x)-b(x)|\bigr)\,.
\end{eqnarray*}
We estimate, for $f\in H^2$
\begin{eqnarray}
\notag
\Re\langle f, (\lambda_0-A)f\rangle
&=& \int_0^{2\pi} |f''|^2
-a(x)|f'|^2
+\Bigl(\lambda_0+\frac{1}{2} a''(x) - \frac{1}{2} b'(x) + c(x)\Bigr)|f|^2\,
dx\\
\label{eq:Re}
&\ge & 2\pi \sum_{p=-\infty}^\infty \frac{1+p^4}{2} \,|\hat
f(p)|^2\\
\notag
&=& \frac{1}{2} ||f||_{H^2}^2\,.
\end{eqnarray}
This shows that the spectrum of $A$ lies in the half plane
$\Re\,z<\lambda_0-\frac{1}{2}$.  Similarly,
\begin{eqnarray*}
|\Im\langle f, (\lambda_0-A)f\rangle|
&\le& \int_0^{2\pi} |a'(x)-b(x)|
\, |\Im f'\bar f| \,dx\\
&\le & 2\pi \max_x|a'(x)-b(x)|\sum_{p=-\infty}^\infty |p|\, |\hat f(p)|^2\,.
\end{eqnarray*}
For $||f||=1$ it follows that
$$
\frac{|\Im\, \left\langle f, (\lambda_0-A) f\right\rangle |}
{\Re \left\langle f, (\lambda_0-A)f\right\rangle }
\le \bigl(\max_x|a'(x)-b(x)|\bigr)\,
\Bigl(\sup_{p\in\ZZ}\frac{2|p|}{1+p^4}\Bigr)  = \max_x|a'(x)-b(x)|\,,
$$
which yields the claim.
\end{proof}

\bigskip
The lemma implies that the Cauchy
problem for $A$ has a unique
solution for every initial value
$h_0\in L^2$.  This solution is
analytic in $t$ for $t>0$,
and for any fixed $t > 0$, the function
$h(t, \cdot) \in \Dom(A)$. If the coefficients of $A$ are
analytic, then $h$ is analytic in both variables for $t>0$.
An application of the Lax-Milgram  theorem
similar to Lemma~\ref{lem:Lax-Milgram} below
shows that
$(\lambda_0-A)^{-1}$ maps $L^2$ into $H^2$.
It follows that the resolvent is a compact
operator of the Hilbert-Schmidt type,
and that the spectrum of $A$ is discrete.

\section{The integral kernel $U(x,y)$}
\label{sec:PDE}

Let $A$ be the differential operator from Eq.~(\ref{eq:def-A}).
Theorem~\ref{belonosov-theorem} implies that
Lyapunov's equation has a unique solution $U$, provided that
the spectra of $A$ and $A^*$ are disjoint. Our goal is to
show that $U$ admits an integral representation
$$
U(f) = \int_{0}^{2\pi} U(x,y) f(y)\, dy\,,
$$
and to derive bounds on $U(x,y)$.
Equation~(\ref{eq:Lyapunov}) requires that
$\left\langle (A^* U + UA^*)\phi,  \psi\right\rangle = \langle \phi,
\psi \rangle $
for all smooth periodic test functions functions $\phi,\psi$.
This means that $U(x,y)$ is  a distributional solution of the
partial differential equation~(\ref{eq:PDE-U}).

Let us solve Eq.~(\ref{eq:PDE-U}) in the
special case $A_0=-D^4$, given by
$$
A_0[f]=-f^{''''} - f\,.
$$
By our choice of norms, $-A_0$ defines an isometry from $H^4$
onto $L^2$.
Since $A_0$ has constant
coefficients, the unique solution
can be written as $U_0(x,y)=u_0(x-y)$, where
$$
2 A_0 u_0= \delta_0\,,
$$
in other words, $2U_0(x,y)$ is the Green's function
of $A_0$ on $[0,2\pi]$ with periodic boundary conditions.
One can compute $u_0(x)$  explicitly as a linear combination
$$
u_0(x)= C_1 \cos \frac{x-\pi}{\sqrt2} \cosh\frac{x-\pi}{\sqrt2}
+ C_2 \sin \frac{x-\pi}{\sqrt2} \sinh\frac{x-\pi}{\sqrt2}\,,
$$
where the coefficients are adjusted so that
$u_0$ is periodic and twice differentiable, and its
third derivative jumps by $-1/2$ at $x=0$. From this representation, it is
clear that $U_0$ is smooth away from the line $x=y$,
and that $U_0\in {\cal C}^{2,1}\subset {\cal H}^3$.
Alternately, we easily obtain from the
Fourier representation of $A_0$ that
$\hat u_0(p)=-\frac{1}{4\pi(1+p^4)}$, and
\begin{equation}\label{eq:def-U0}
U_0(x,y)=-\frac{1}{4\pi}\sum_{p=-\infty}^\infty \frac{1}{1+p^4} e^{ip(x-y)}\,.
\end{equation}
In particular, $U_0(x,y)\in H^s$ for all $s<\frac{7}{2}$, and
$||U_0||_{{\cal H}^3} \le 1$.

It remains to analyze the difference
$$
K(x,y):= U(x,y)-U_0(x,y) \,.
$$
By definition, $K$ solves the partial differential equation
\begin{equation}
\label{eq:PDE-K}
{\cal A}^*K(x,y)
=  -\bigl({\cal A}^*-{\cal A}_0\bigr) U_0(x,y)\,.
\end{equation}
The second order differential operator ${\cal A}^*-{\cal A}_0$
maps maps $H^2$ into $L^2$, see Lemma~\ref{lem:Leibnitz}.
A weak solution of this equation is provided by
the next lemma.

\begin{lemma}[Construction of $K$]
\label{lem:Lax-Milgram}
The resolvent of ${\cal A}^*$ is compact and maps ${\cal L}^2$ into
${\cal H}^2$.
\end{lemma}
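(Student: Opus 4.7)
The plan is to construct $(\lambda_0-\mathcal{A}^*)^{-1}$ via Lax--Milgram on the Hilbert space $\mathcal{H}^2$, and then obtain compactness from the compact embedding $\mathcal{H}^2\hookrightarrow \mathcal{L}^2$. Since $\mathcal{A}^*=\mathcal{A}^*_x+\mathcal{A}^*_y$ is a sum of two copies of $A^*$ acting in one variable, the natural strategy is to reduce everything to the one-variable estimate already derived in Lemma~\ref{lem:sector}.

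First I would fix $\lambda_0\in\RR$ large enough that the one-variable estimate
$$
\Re\langle f,(\tfrac{\lambda_0}{2}-A^*)f\rangle \ge \tfrac14\|f\|_{H^2}^2
$$
holds (the constants in Lemma~\ref{lem:sector} work for $A^*$ as well, after swapping the roles of $a,b$). Next, define on $\mathcal{H}^2$ the sesquilinear form
$$
B(F,G)=\langle (\lambda_0-\mathcal{A}^*)F,G\rangle_{\mathcal{L}^2}.
$$
Boundedness $|B(F,G)|\le C\|F\|_{\mathcal{H}^2}\|G\|_{\mathcal{L}^2}\le C\|F\|_{\mathcal{H}^2}\|G\|_{\mathcal{H}^2}$ follows from Lemma~\ref{lem:Leibnitz} applied in each variable, using that the coefficients are bounded. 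For coercivity I would apply the one-variable estimate in $x$ and in $y$ separately (for $F(\cdot,y)$ and $F(x,\cdot)$) and integrate in the frozen variable, then add:
$$
\Re B(F,F) \ge \tfrac14\|F\|_{H^2_xL^2_y}^2+\tfrac14\|F\|_{L^2_xH^2_y}^2.
$$
Expanding the right-hand side in Fourier series produces
$$
\tfrac14\cdot 4\pi^2\sum_{p,q}\bigl((1+p^4)+(1+q^4)\bigr)|\hat F(p,q)|^2
=\tfrac14\|F\|_{\mathcal{H}^2}^2,
$$
matching exactly the Fourier weight $(2+p^4+q^4)$ in the definition of $\|\cdot\|_{\mathcal{H}^2}$. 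This is the main technical point: the particular choice of Sobolev weights in Section~\ref{sec:basic} is what makes the two one-variable bounds add up cleanly to give $\mathcal{H}^2$-coercivity.

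With boundedness and coercivity of $B$ on $\mathcal{H}^2$ established, the Lax--Milgram theorem produces, for every $G\in\mathcal{L}^2\supset (\mathcal{H}^2)^*$, a unique $F\in\mathcal{H}^2$ with $B(F,\Phi)=\langle G,\Phi\rangle$ for all $\Phi\in\mathcal{H}^2$, together with the estimate $\|F\|_{\mathcal{H}^2}\le 4\|G\|_{\mathcal{L}^2}$. This $F$ is a distributional solution of $(\lambda_0-\mathcal{A}^*)F=G$, so $\lambda_0$ lies in the resolvent set and $R_{\lambda_0}(\mathcal{A}^*)\colon \mathcal{L}^2\to\mathcal{H}^2$ is bounded. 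Finally, compactness of $R_{\lambda_0}(\mathcal{A}^*)$ as an operator on $\mathcal{L}^2$ is immediate from Rellich's theorem for the torus, which gives the compact embedding $\mathcal{H}^2\hookrightarrow \mathcal{L}^2$ (equivalently, by decay of $(2+p^4+q^4)^{-1}$ the resolvent is Hilbert--Schmidt). The only delicate step is the coercivity calculation; everything else is bookkeeping.
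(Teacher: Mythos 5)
Your proof is correct and follows exactly the same Lax--Milgram strategy as the paper: shift $\mathcal{A}^*$ by a suitable real constant, establish coercivity of the associated bilinear form on $\mathcal{H}^2$ by applying the one-variable estimate of Lemma~\ref{lem:sector} in each variable separately (the paper cites Eq.~(\ref{eq:Re}) for this), and then obtain compactness from the embedding $\mathcal{H}^2\hookrightarrow\mathcal{L}^2$. Your explicit observation that the Fourier weights $(2+p^4+q^4)$ were chosen precisely so that the two one-variable $H^2$ bounds sum to the $\mathcal{H}^2$ norm is the correct reading of what the paper leaves implicit. One small slip: the intermediate inequality $|B(F,G)|\le C\|F\|_{\mathcal{H}^2}\|G\|_{\mathcal{L}^2}$ cannot hold as written, since the principal fourth-order part of $\mathcal{A}^*$ requires two derivatives on each argument after integration by parts; the bound Lax--Milgram actually needs, $|B(F,G)|\le C\|F\|_{\mathcal{H}^2}\|G\|_{\mathcal{H}^2}$, is the one you should state directly (and it is correct, via the decomposition $B(\Phi,\Psi)=\langle\Phi,\Psi\rangle_{\mathcal{H}^2}+\text{shift}-\langle\Phi,(\mathcal{A}^*-\mathcal{A}_0)\Psi\rangle$ and Lemma~\ref{lem:Leibnitz}, as in the paper).
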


\begin{proof} Let $\lambda_0$ be the vertex of the sector
computed in Lemma~\ref{lem:sector}, and assume
that $F(x,y)\in{\cal L}^2$.  We verify that the equation
$$ 
\bigl(2\lambda_0 -{\cal A}^*\bigr) K(x,y) = F(x,y)
$$
satisfies the assumptions of the Lax-Milgram theorem, as stated in
[Evans, PDE, p. 297] \cite{Evans}.

Define a bilinear form on
on smooth doubly periodic functions $\Phi, \Psi$
$$
B(\Phi,\Psi)
=\langle (\Phi, (2\lambda_0- {\cal A}^*)\Psi\rangle_{{\cal L}^2}\,.
$$
Then $B$ is extended continuously to ${\cal H}^2$ by
$$
B(\Phi,\Psi)
= \langle \Phi, \Psi\rangle_{{\cal H}^2}
+2 \lambda_0\langle \Phi ,\Psi\rangle_{{\cal L}^2}
-\left\langle \Phi, ({\cal A}^*-{\cal A}_0)
\Psi\right\rangle_{{\cal L}^2}\,.
$$
On the other hand, it follows from Eq.~(\ref{eq:Re}) that
$$
B(\Phi,\Phi)\ge \frac{1}{2}||\Phi||_{{\cal H}^2}^2\,.
$$
Finally, the map
$$
\Phi \mapsto -\langle \Phi,F\rangle_{{\cal L}^2}
$$
defines a continuous linear form on ${\cal H}^2$.
The Lax-Milgram theorem asserts that there exists a unique function
$K(x,y)\in {\cal H}^2$ such that
$$
B(K,\Psi)= \left\langle F, \Psi \right\rangle_{{\cal L}^2}
$$
for all $\Psi\in {\cal H}^2$.
By the resolvent identity, the equation
$$
\bigl({\cal A}^*-\lambda\bigr)K(x,y)=F(x,y)
$$
has a unique weak solution in ${\cal H}^2$
for every value of $\lambda$ that is not an eigenvalue of ${\cal A}^*$
and every $F(x,y)\in {\cal L}^2$.
\end{proof}

\begin{lemma} \label{lem:regularity}
If $K(x,y)\in {\cal H}^2$ solves Eq.~(\ref{eq:PDE-K}),
then $K(x,y)\in {\cal H}^4$, and
$$
||K(x,y)||_{{\cal H}^4} \le 2M ||U_0(x,y)+K(x,y)||_{{\cal H}^2}\,,
$$
where the constant is given by Eq.~(\ref{eq:def-M}).
\end{lemma}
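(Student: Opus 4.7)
The plan is to rewrite Eq.~(\ref{eq:PDE-K}) so that its right-hand side involves the full sum $U = U_0 + K$ rather than just $U_0$, then exploit that $-{\cal A}_0$ is a Fourier isometry from ${\cal H}^4$ onto ${\cal L}^2$ to simultaneously upgrade regularity and derive the norm bound in one step.

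First I would add $({\cal A}^*-{\cal A}_0)K$ to both sides of Eq.~(\ref{eq:PDE-K}) and rearrange to obtain
$$
-{\cal A}_0 K = \bigl({\cal A}^* - {\cal A}_0\bigr)(U_0 + K)\,.
$$
Writing ${\cal A}_0 = A_{0,x} + A_{0,y}$ with $A_0 = -D^4$, the Fourier symbol of $-{\cal A}_0$ is $p^4 + q^4 + 2$, which is exactly the weight defining the ${\cal H}^4$ norm. Hence $||-{\cal A}_0 F||_{{\cal L}^2} = ||F||_{{\cal H}^4}$ for every $F\in{\cal H}^4$, and conversely, once the right-hand side of the displayed identity is shown to lie in ${\cal L}^2$, the Fourier characterization forces $K\in{\cal H}^4$ with $||K||_{{\cal H}^4} = ||{\cal A}_0 K||_{{\cal L}^2}$.

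The third step is to bound $||({\cal A}^*-{\cal A}_0)F||_{{\cal L}^2}$ in terms of $||F||_{{\cal H}^2}$ using Lemma~\ref{lem:Leibnitz} applied separately in each variable. The decomposition ${\cal A}^* - {\cal A}_0 = (A_x^* + D_x^4) + (A_y^* + D_y^4)$, combined with freezing $y$ and integrating, yields
$$
||(A_x^* + D_x^4)F||_{{\cal L}^2}^2 \le M^2 \int_0^{2\pi} ||F(\cdot,y)||_{H^2}^2 \, dy \le M^2 ||F||_{{\cal H}^2}^2\,,
$$
where the last inequality follows from the pointwise comparison $1+p^4 \le 2 + p^4 + q^4$ of the single- and two-variable Fourier weights. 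Adding the symmetric contribution in $y$ gives $||({\cal A}^* - {\cal A}_0)F||_{{\cal L}^2} \le 2M ||F||_{{\cal H}^2}$. Setting $F = U_0 + K$ and combining with Step~2 produces the stated inequality.

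The most delicate point is the bootstrap from the hypothesis $K \in {\cal H}^2$ to the conclusion $K \in {\cal H}^4$, but I expect this to be painless rather than the real obstacle: because ${\cal A}_0$ has constant coefficients and diagonalizes on the Fourier basis, no genuine elliptic regularity theorem is needed beyond inverting its symbol, and the second-order estimate in Step~3 guarantees the right-hand side already lies in ${\cal L}^2$ as soon as $K \in {\cal H}^2$ and $U_0 \in {\cal H}^3$ (the latter being established just after Eq.~(\ref{eq:def-U0})).
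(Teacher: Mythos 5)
Your proposal is correct and matches the paper's proof essentially line for line: both rearrange Eq.~(\ref{eq:PDE-K}) to ${\cal A}_0 K = -({\cal A}^*-{\cal A}_0)(U_0+K)$, use that ${\cal A}_0$ is a Fourier isometry from ${\cal H}^4$ onto ${\cal L}^2$, and estimate $\|{\cal A}^*-{\cal A}_0\|_{{\cal H}^2\to{\cal L}^2}\le 2\|A^*-A_0\|_{H^2\to L^2}\le 2M$ via Lemma~\ref{lem:Leibnitz}. You merely spell out the variable-splitting Fubini argument that the paper leaves implicit (and write ``add'' where you mean ``subtract,'' though your displayed identity is correct).
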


\begin{proof}
If $K(x,y)$ solves Eq.~(\ref{eq:PDE-K}), then
$$
{\cal A}_0 K =-({\cal A}^*-{\cal A}_0)(U_0+K)\,,
$$
and we conclude that
$$
||K(x,y)||_{{\cal H}^4}
\le ||{\cal A}^*-{\cal A}_0||_{{\cal H}^2\to {\cal L}^2}
||U_0+K||_{{\cal H}^2}
\le 2||A^*-A_0||_{H^2\to L^2}||U_0+K||_{{\cal H}^2} \,.
$$
The proof is completed with Lemma~\ref{lem:Leibnitz}.
\end{proof}

\section{Estimates for the operator $U$}
\label{sec:operator}

In this section, we derive bounds for
$U=U_0+K$ as an operator on $L^2$.
Since $K(x,y)\in {\cal H}^4$, while $U_0(x,y)\in H^s$ only
for $s<7/2$, the Fourier coefficients of
$K(x,y)$ decay more quickly than the Fourier
coefficients of $U_0(x,y)$. This in turn implies that
the restriction of $U$ to  high Fourier
modes is dominated by $U_0$. In this section, we
provide the relevant estimates.

As a consequence of the regularity result in
Lemma~\ref{lem:regularity} we see that $U$ defines a bounded linear
operator from $L^2$ to $H^4$, with
$$
||U||_{L^2\to H^4} \le ||U_0||_{L^2\to H^4} + ||K||_{L^2\to H^4}
\le \frac{1}{2} + ||K(x,y)||_{{\cal H}^4}\,.
$$
We have used that $D_x^4U_0=\frac{1}{2}\delta$ and
applied Eq.~(\ref{eq:Schwarz}) to $D^4 K(x,y)$.

One attractive property of the ${\cal H}^4$-norm is that
it depends only on the magnitude of the Fourier
coefficients, not on the phases.
In contrast, the operator norm
$$
||F||_{L^2\to H^4} = \sup_{||\phi||_{L^2}=||\psi||_{L^2}=1}
\langle D^4F\phi, \psi \rangle
= 4\pi^2 \sum_{p,q=-\infty}^\infty
(1+p^4) \hat F(p,q)\hat\phi(q)\hat\psi(p)
$$
can change drastically if we replace $\hat F(p,q)$ by $|\hat
F(p,q)|$. This dependence on cancelations can cause difficulties in
estimates: Multiplying the Fourier coefficients of $F$ with factors
$\alpha(p,q)\in [0,1]$ will not necessarily decrease the operator
norm. On the other hand, the $H^4$-norm provides only a rather loose
bound on the norm of the corresponding integral operator.  For
instance, the kernles $U_0(x,y)$ (and consequently $U(x,y)$ does not
lie in ${\cal H}^4$, even though $||U_0||_{L^2\to H^4}=\frac{1}{2}$.

We find it useful to introduce another norm
on integral kernels that
lies between the ${\cal H}^4$-norm
(as a function of two variables), and the operator
norm (as a linear transformation from $L^2$ to $H^4$).
By construction, this norm depends only
on the modulus of the Fourier coefficients.

\begin{lemma}[Auxiliary norm]
Define, for smooth doubly periodic functions $F$
$$
|||F||| := 4\pi^2 \sup_{||\phi||=||\psi||=1}
\sum_{p,q=-\infty}^\infty (2+p^4+q^4)
|\hat F(p,q)|\,|\hat\phi(p)|\,|\hat \psi(q)|\,.
$$
Then
$$
|||F||| \le ||F(x,y)||_{{\cal H}^4}\,,
$$
and
$$
|||F|||\ge \max\bigl\{ ||F||_{L^2\to H^4},
||F||_{H^{-4}\to L^2}, 2||F||_{H^{-2}\to H^2}\bigr\}\,.
$$
\end{lemma}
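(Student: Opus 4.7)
The strategy is to reduce each of the three inequalities to an elementary pointwise comparison between the weight $(2+p^4+q^4)$ and the Fourier weight naturally attached to the relevant norm, and then to close by Cauchy--Schwarz and Parseval. For the upper bound $|||F|||\le ||F(x,y)||_{{\cal H}^4}$, I would apply Cauchy--Schwarz to the summand defining $|||F|||$, grouping $(2+p^4+q^4)|\hat F(p,q)|$ against $|\hat\phi(p)||\hat\psi(q)|$: the first factor squared sums to $||F||_{{\cal H}^4}^2/(4\pi^2)$ by definition, Parseval gives $\sum_p |\hat\phi(p)|^2 = ||\phi||_{L^2}^2/(2\pi)$, and the $4\pi^2$ prefactor in the definition of $|||\cdot|||$ balances the constants.

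For the lower bounds I would expand
$$
\langle F\phi,\psi\rangle \;=\; 4\pi^2 \sum_{p,q} \hat F(p,q)\,\hat\phi(-q)\,\overline{\hat\psi(p)}
$$
and parametrize unit vectors in $H^{\pm s}$ by unit vectors in $L^2$: for $||F||_{L^2\to H^4}$, the identity $||F\phi||_{H^4}=\sup\{|\langle D^4 F\phi,\psi\rangle| : ||\psi||_{L^2}=1\}$ introduces the weight $(1+p^4)$; for $||F||_{H^{-4}\to L^2}$, writing $\hat\phi(q)=(1+q^4)\hat\phi_0(q)$ with $||\phi_0||_{L^2}=||\phi||_{H^{-4}}$ introduces $(1+q^4)$; for $||F||_{H^{-2}\to H^2}$, analogous substitutions on both sides introduce $(1+p^4)^{1/2}(1+q^4)^{1/2}$. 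Using the three weight inequalities
$$
1+p^4 \le 2+p^4+q^4\,,\qquad 1+q^4 \le 2+p^4+q^4\,,\qquad (1+p^4)^{1/2}(1+q^4)^{1/2}\le \tfrac{1}{2}(2+p^4+q^4)
$$
(the last by AM--GM), and noting that the supremum over unit $L^2$-vectors is unchanged when $\hat\phi,\hat\psi$ are replaced by their moduli and $q$ by $-q$ (both $L^2$-isometries), each resulting absolute-value sum is bounded by $|||F|||$, or by $\tfrac{1}{2}|||F|||$ in the $H^{-2}\to H^2$ case.

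There is no conceptual obstacle here; the argument is essentially a bookkeeping exercise in Cauchy--Schwarz once the right weight identifications are made. The main care required is keeping track of the $2\pi$ and $4\pi^2$ factors between the one- and two-variable Fourier conventions, and verifying that the reflection and moduli substitutions do not alter the class of test functions over which the supremum in $|||F|||$ is taken. The AM--GM step in the $H^{-2}\to H^2$ bound is sharp (equality at $p^4=q^4$) and is precisely what forces the factor of $2$ in the statement.
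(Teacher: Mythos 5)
Your proposal is correct and follows essentially the same route as the paper: Cauchy--Schwarz/Parseval (equivalently $L^2$ duality against a rank-one tensor in ${\cal L}^2$) for the upper bound, and for the lower bounds the operator-norm expansion in Fourier modes followed by the pointwise weight comparisons $1+p^4\le 2+p^4+q^4$, $1+q^4\le 2+p^4+q^4$, and $(1+p^4)^{1/2}(1+q^4)^{1/2}\le\tfrac12(2+p^4+q^4)$; the paper leaves the latter two as ``similarly,'' which you have simply made explicit.
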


\begin{proof}
From the Fourier representation, we see that
\begin{eqnarray*}
|||F|||
&\le& \sup_{||\Phi(x,y)||_{{\cal L}^2}=1}
4\pi^2 \sum_{p,q=i\infty}^\infty
\sum_{p,q=-\infty}^\infty (2+p^4+q^4) |\hat F(p,q)|
\,|\hat\Phi(p,q)|\\
&\le& \sup_{||\Phi(x,y)||_{{\cal L}^2}=1}
\langle {\cal A}_0 F, \Phi\rangle_{{\cal L}^2}\\
&=& ||F(x,y)||_{{\cal H}^4}\,.
\end{eqnarray*}
On the other hand,
$$
||F||_{L^2\to H^4} = ||D^4 F||_{L^2\to L^2}
= \sup_{||\phi||=||\psi||=1}
\sum_{p,q=-\infty}^\infty (1+p^4)\hat\phi(p)\hat \psi(q)\hat F(p,q)
\le |||F|||\,,
$$
and similarly
$$
||F||_{H^{-2}\to H^2}\le \frac{1}{2} |||F|||\,,
\quad ||F||_{H^{-4}\to L^2}\le |||F|||\,.
$$
\end{proof}

We note that if $F$ has positive Fourier coefficients,
then $|||F|||$ agrees with the operator norm of ${\cal A}_0F$
as a linear transformation from $L^2$ into itself.
In particular, $ |||U_0|||=1$.

\begin{lemma} [Tail estimate]
\label{lem:tail}
Assume that $K(x,y)$ solves
Eq.~(\ref{eq:PDE-K}), and let $M$ be given
by Eq.~(\ref{eq:def-M}). Then
$$
|||K-P_NKP_N ||| \le M N^{-2} \bigl(|||U_0+K|||\bigr)\,.
$$
\end{lemma}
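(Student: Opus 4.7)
The plan is to use the Lyapunov-type identity for $K$, extract an $N^{-2}$ factor from the gap in the Fourier multiplier $1/(2+p^4+q^4)$ outside $[-N,N]^2$, and then bound the remaining weighted sum by the operator norm $M$ of the perturbation $\mathcal{B}$.

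From the derivation at the start of Lemma~\ref{lem:regularity} we have $\mathcal{A}_0 K = -\mathcal{B} H$, where $H := U_0+K$ and $\mathcal{B} := \mathcal{A}^* - \mathcal{A}_0 = B_x+B_y$; here $B = A^* - A_0$ is the second-order operator of Lemma~\ref{lem:Leibnitz}, bounded from $H^2$ to $L^2$ with norm at most $M$. On the Fourier side this reads $(2+p^4+q^4)\hat K(p,q) = -\widehat{\mathcal{B}H}(p,q)$, so by the definition of the auxiliary norm,
\[
|||K - P_N K P_N||| = 4\pi^2 \sup_{\|\phi\|_{L^2}=\|\psi\|_{L^2}=1} \sum_{(p,q)\in T_N} \bigl|\widehat{\mathcal{B}H}(p,q)\bigr|\,|\hat\phi(p)|\,|\hat\psi(q)|,
\]
where $T_N := \{(p,q) : |p|\ge N \text{ or } |q|\ge N\}$ is the tail outside $[-N,N]^2$.

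The $N^{-2}$ decay is extracted by noting that on $T_N$, $p^4+q^4\ge N^4$, so $\sqrt{2+p^4+q^4}\ge N^2$ and therefore $\mathbb{1}_{T_N}\le N^{-2}\sqrt{2+p^4+q^4}$. Relaxing the tail restriction with this estimate yields
\[
|||K - P_N K P_N||| \le N^{-2}\cdot 4\pi^2\sup \sum_{p,q}\sqrt{2+p^4+q^4}\,\bigl|\widehat{\mathcal{B}H}(p,q)\bigr|\,|\hat\phi(p)|\,|\hat\psi(q)|.
\]
The remaining task is to show that the last supremum is bounded by $M\,|||H|||/(4\pi^2)$. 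Splitting $\mathcal{B} = B_x+B_y$ and using $\widehat{B_xH}(p,q) = \sum_r B_{pr}\hat H(r,q)$ with $|B_{pr}|\le r^2|\hat a(p-r)|+|r||\hat b(p-r)|+|\hat{(c-1)}(p-r)|$, the factor $\sqrt{2+p^4+q^4}$ gets absorbed into the natural weight $(2+r^4+q^4)$ appearing in $|||H|||$, and the resulting convolution against $\gamma_k := |\hat a(k)|+|\hat b(k)|+|\hat{(c-1)}(k)|$ is controlled by Young's inequality and the bound $\|\gamma\|_{\ell^1}\le M$, which is exactly the $H^1\hookrightarrow L^\infty$ estimate underlying Lemma~\ref{lem:Leibnitz}.

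The main obstacle is this final weight redistribution. One cannot naively split $p^2\le 2r^2+2(p-r)^2$, since the factor $(p-r)^2$ would be absorbed into $\gamma$ and hence force $a,b,c-1\in H^2$, strictly stronger than the $H^1$ hypothesis. Instead the weight $\sqrt{2+p^4+q^4}$ must be distributed so that only multiplication by the coefficients, and never their derivatives, enters the convolution. Tracking the $0.52$-prefactor from the Sobolev embedding in Lemma~\ref{lem:Leibnitz} then produces the sharp constant $M$ stated in the lemma.
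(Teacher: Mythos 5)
Your setup — the PDE identity $\mathcal{A}_0 K = -\mathcal{B}(U_0+K)$, passing to Fourier coefficients of the auxiliary norm, and extracting $N^{-2}$ from the tail — is the same route the paper takes, and you have put your finger on exactly the step the paper's terse proof slides over. The paper writes the middle bound as if $\mathcal{B}=\mathcal{A}^*-\mathcal{A}_0$ were a Fourier multiplier, i.e.\ as if $|\widehat{\mathcal{B}U}(p,q)|\lesssim M\bigl[(1+p^4)^{1/2}+(1+q^4)^{1/2}\bigr]|\hat U(p,q)|$, but $\mathcal{B}$ is a variable-coefficient operator and acts by a convolution in the frequency indices. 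Your proposal correctly refuses to make that substitution and instead writes $\widehat{B_xH}(p,q)=\sum_r B_{pr}\hat H(r,q)$.

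The gap is that you then assert, rather than prove, that ``the weight $\sqrt{2+p^4+q^4}$ must be distributed so that only multiplication by the coefficients, and never their derivatives, enters the convolution,'' and that this yields the constant $M$. This cannot be done in the generality you need. In the $B_x$ contribution the Leibniz factor is $(1+r^4)^{1/2}$, attached to the \emph{intermediate} index $r$, while the weight you have produced, $\sqrt{2+p^4+q^4}$, sits on the \emph{output} index $p$. For the part of the tail with $|p|\ge N$, $|q|<N$, there is no inequality of the form $\sqrt{2+p^4+q^4}\,(1+r^4)^{1/2}\le f(p-r)\,(2+r^4+q^4)$ with $\|f\gamma\|_{\ell^1}\lesssim M$ unless $f$ grows like $(p-r)^2$, which is precisely the $H^2$-on-the-coefficients scenario you correctly ruled out. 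The cross-regime is where the estimate actually lives, and neither your proposal nor the paper's one-line bound produces the $N^{-2}$ there. Concretely, taking $a=b=0$ and $c-1=2\varepsilon\cos(Nx)$ gives $M\sim\varepsilon N$, $|||U|||\approx 1$, yet the $(p,q)=(N,0)$ Fourier coefficient alone forces $|||K-P_NKP_N|||\gtrsim\varepsilon$, which exceeds $MN^{-2}|||U|||\sim\varepsilon/N$ once $N\ge 4$. If one additionally assumes the coefficients have Fourier support in $\{-L,\dots,L\}$ (true in the rotating-cylinder application, where they are low-degree trigonometric polynomials), then $r$ is within $L$ of $p$, the tail restriction propagates to $|r|\ge N-L$, and the argument closes with constant $M(N-L)^{-2}$; but that restriction, or something equivalent, is a missing hypothesis, and your final paragraph needs to supply it explicitly rather than assert that the weight transfer works.
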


\begin{proof} Using Eq.~(\ref{eq:PDE-K})
together with the definition of the norm, we obtain
\begin{eqnarray*}
|||(K-P_NKP_N|||
&=& 4\pi^2 \sup_{||\phi||=||\psi||=1} \
\sum_{ |p|\ge N \ \mbox{\scriptsize or}\ |q|\ge N} \
|\hat \phi(p)|\,|\hat \psi(q)|\,
|\widehat{({\cal A}^*\!-\!{\cal A}_0)U}(p,q)|\\[0.1cm]
&\le& \sup_{|p|\ge N \ \mbox{\scriptsize or}\ |q|\ge N}\
\frac{(1+p^4)^{1/2} + (1+q^4)^{1/2}}{2+p^4+q^4}\,
M |||U|||\\[0.2cm]
&\le& M N^{-2}\bigl(|||U_0+K|||\bigr)\,.
\end{eqnarray*}
\end{proof}

\begin{lemma} \label{lem:Uge1} If $U$ solves Eq.~(\ref{eq:PDE-U}), then
$|||U|||\ge 1$.
\end{lemma}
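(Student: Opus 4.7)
The plan is to reduce the inequality $|||U|||\ge 1$ to a pointwise Fourier estimate for $\hat U(p,q)$, and then read off the bound from Eq.~(\ref{eq:PDE-U}) on the anti-diagonal $q=-p$ in the limit $|p|\to\infty$.

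The first step is to recognize $|||\cdot|||$ as (up to a factor of $2\pi$) the $\ell^2\to\ell^2$ operator norm of the non-negative matrix $M_{p,q}:=(2+p^4+q^4)|\hat U(p,q)|$. Substituting $\alpha_p:=\sqrt{2\pi}\,|\hat\phi(p)|$ and $\beta_q:=\sqrt{2\pi}\,|\hat\psi(q)|$ converts the normalization $\|\phi\|_{L^2}=\|\psi\|_{L^2}=1$ to $\|\alpha\|_{\ell^2}=\|\beta\|_{\ell^2}=1$, and yields
$$
|||U||| \;=\; 2\pi\,\|M\|_{\ell^2\to\ell^2}\;\ge\;2\pi\sup_{p,q}M_{p,q},
$$
where the last inequality is the standard bound $\|M\|\ge \langle Me_q,e_p\rangle=M_{p,q}$ for a matrix with non-negative entries.

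The second step is to turn Eq.~(\ref{eq:PDE-U}) into a pointwise Fourier identity. Splitting $\mathcal{A}^*=\mathcal{A}_0+(\mathcal{A}^*-\mathcal{A}_0)$ and observing that $\mathcal{A}_0$ acts in Fourier space as multiplication by $-(2+p^4+q^4)$, while $\widehat{\delta_{y-x}}(p,q)=\tfrac{1}{2\pi}\delta_{p+q,0}$, one obtains
$$
(2+p^4+q^4)\,\hat U(p,q) \;=\; \hat G(p,q) \;-\; \tfrac{1}{2\pi}\delta_{p+q,0},\qquad G:=(\mathcal{A}^*-\mathcal{A}_0)U.
$$
By Lemma~\ref{lem:Leibnitz} together with the $\mathcal{H}^2$-regularity of $U$ from Lemma~\ref{lem:regularity}, the kernel $G$ lies in $\mathcal{L}^2$, so $\hat G(p,q)\to 0$ as $|p|+|q|\to\infty$ by Riemann--Lebesgue.

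Restricting to $q=-p$ and taking absolute values gives
$(2+2p^4)\,|\hat U(p,-p)| = \bigl|\hat G(p,-p)-\tfrac{1}{2\pi}\bigr|\to\tfrac{1}{2\pi}$
as $|p|\to\infty$. Inserting this into the supremum bound of the first step yields $|||U|||\ge 2\pi\cdot\tfrac{1}{2\pi}=1$. I don't foresee any real obstacle: the only care needed is to align the Fourier conventions so that the source $\delta_{y-x}$ contributes exactly $1/(2\pi)$ on the anti-diagonal, after which the argument is essentially algebraic.
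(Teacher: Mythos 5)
Your proof is correct, and it takes a genuinely different route from the paper's. The paper also starts from the decomposition $U=U_0+K$, but it works at the level of norms: it applies the reverse triangle inequality to $|||(I-P_N)U(I-P_N)|||$, uses the fact that $|||(I-P_N)U_0(I-P_N)|||=1$ for every $N$, and invokes the tail estimate of Lemma~\ref{lem:tail} to show $|||(I-P_N)K(I-P_N)|||\le MN^{-2}|||U|||\to 0$; the claim follows by letting $N\to\infty$. You instead reduce $|||\cdot|||$ to the $\ell^2\to\ell^2$ operator norm of the non-negative matrix $M_{p,q}=(2+p^4+q^4)|\hat U(p,q)|$, extract the single anti-diagonal entry $M_{p,-p}$ as a lower bound, read off $(2+2p^4)\hat U(p,-p)=\hat G(p,-p)-\tfrac{1}{2\pi}$ from the Fourier form of Lyapunov's equation, and apply Riemann--Lebesgue to $G=(\mathcal{A}^*-\mathcal{A}_0)U\in\mathcal{L}^2$. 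This is more elementary in that it does not need the quantitative tail estimate (Lemma~\ref{lem:tail}), only the qualitative fact that $\hat G$ vanishes at infinity, and it isolates cleanly where the lower bound $1$ comes from: the anti-diagonal singular contribution of the $\delta$ source, which survives undiminished at high frequency. Both approaches ultimately encode the same mechanism (the fourth-order principal part forces $U$ to look like $U_0$ at high Fourier modes). One small slip in attribution: the $\mathcal{H}^2$-regularity of $K$ (hence of $U$, since $U_0\in\mathcal{H}^3$) is supplied by the Lax--Milgram construction, Lemma~\ref{lem:Lax-Milgram}; Lemma~\ref{lem:regularity} is the further bootstrap to $\mathcal{H}^4$, which you do not need here.
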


\begin{proof} Write $U=U_0+K$, and estimate
$$
|||U||| 
\ge |||(I-P_N)U_0(I-P_N)||| - |||(I-P_N)K(I-P_N)|||\,.
$$
The first summand is bounded below by $1$
because ${\cal A}_0U_0=I$, and the second summand is
bounded by $M N^{-2}|||U|||$ according to Lemma~\ref{lem:tail}.
We conclude that
$(1-MN^{-2})|||U|||\ge 1$ for each $N$, and
the claim follows upon taking $N\to\infty$.
\end{proof}

\section{Addition rule for the instability index}
\label{sec:addition}

We return to the Pontryagin space $\Pi$ introduced in
Section~\ref{sec:Lyap}, with the indefinite inner
product given by Eq.~(\ref{eq:innerproduct}).
Let $\Pi_1$ be a finite-dimensional
subspace of $\Pi$, and let
$$
\Pi_2= \Pi_1^{\perp_U}
=\bigl\{f\in\Pi\ \Big\vert [f,g]=0 \ \mbox{for all}\ g\in \Pi_1\bigr\}
$$ be its $U$-orthogonal complement.
By construction, $\dim \Pi_1=\codim \Pi_2$.
The natural question is can we
compute $\kappa(U) $ from the
restrictions $\kappa(U\vert_{\Pi_1})$ and $\kappa(U\vert_{\Pi_2})$?
The difficulty is that $\Pi$ need not be a direct sum
of $\Pi_1$ and $\Pi_2$, because the two subspaces may intersect
non-trivially in a subspace where the quadratic form vanishes.

A subspace $X\subset \Pi$ is called {\bf neutral},
if $[\phi,\phi] = 0$ for all $\phi\in X$.  Two finite-dimensional
neutral subspaces $X$ and $Y$ of $H$
are $\Pi$-{\bf skewly linked}, if $$\dim X=\dim Y$$
and the inner product $[.,.]$ does not degenerate on the direct sum
$X \dot{+} Y$. In particular, no vector of $X$ different
from $0$ is orthogonal to the skewly linked subspace $Y$,
and vice versa.

\begin{theorem}[Theorem 3.4 \cite{iohvidov}]
\label{langer} Let $\Pi_1$ be an arbitrary subspace of $H$,
let $\Pi_2$ be its $U$-orthogonal complement,
and let $X = \Pi_1 \cap \Pi_2$ be their intersection.
There exists a neutral subspace $Y\subset \Pi$ that is skewly linked to $X$
and provides a $U$-orthogonal decomposition
\begin{equation}
\label{decomp} \Pi = \Pi_1' \oplus (X \dot{+} Y) \oplus \Pi_2'\,
\end{equation}
where
$$
\Pi_1 = \Pi_1' \oplus X, \qquad \Pi_2 = \Pi_2' \oplus X.
$$
\end{theorem}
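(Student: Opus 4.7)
The plan is to construct $Y$ by dualizing a basis of $X$ against the indefinite form $[\cdot,\cdot]$, then correcting the dual vectors to be neutral, and finally carving $\Pi_1'$ and $\Pi_2'$ out of the $U$-orthogonal complement of $W:=X\dot{+}Y$. I would begin by noting that $X$ is itself neutral: for $\phi,\psi\in X$, $\psi\in\Pi_1$ and $\phi\in\Pi_2=\Pi_1^{\perp_U}$, so $[\phi,\psi]=0$; since $\Pi$ is a Pontryagin space of finite index $\kappa(U)$, $X$ is finite-dimensional, of some dimension $n$. Fix a basis $x_1,\ldots,x_n$ of $X$. Nondegeneracy of $[\cdot,\cdot]$ (from invertibility of $U$) together with a Hahn--Banach/Riesz argument produces vectors $z_1,\ldots,z_n\in\Pi$ with $[x_i,z_j]=\delta_{ij}$; writing the Hermitian Gram matrix $C_{ij}:=[z_i,z_j]$ and setting
$$
y_j:=z_j-\tfrac12\sum_{k=1}^{n}C_{jk}\,x_k,
$$
one verifies (using that $X$ is neutral and $C$ is Hermitian) that $[x_i,y_j]=\delta_{ij}$ and $[y_i,y_j]=0$. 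Thus $Y:=\mathrm{span}(y_1,\ldots,y_n)$ is neutral and skewly linked to $X$; on $W$ the Gram matrix of $[\cdot,\cdot]$ is the hyperbolic block $\bigl(\begin{smallmatrix}0&I\\I&0\end{smallmatrix}\bigr)$, hence nondegenerate, and so $\Pi=W\oplus W^{\perp_U}$.

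I would then set $\Pi_1':=\Pi_1\cap W^{\perp_U}$ and $\Pi_2':=\Pi_2\cap W^{\perp_U}$. For $f\in\Pi_1$ the correction $f':=f-\sum_j[f,y_j]\,x_j$ lies in $\Pi_1\cap W^{\perp_U}=\Pi_1'$: it is annihilated by each $y_i$ via the duality $[x_j,y_i]=\delta_{ji}$, and by each $x_i$ automatically since $\Pi_1\perp_U X$. The sum $\Pi_1'+X$ is direct because any nonzero $x\in X$ pairs nontrivially with some $y_j$ and hence lies outside $W^{\perp_U}$, yielding $\Pi_1=\Pi_1'\oplus X$; the identical argument gives $\Pi_2=\Pi_2'\oplus X$. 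Pairwise $U$-orthogonality of $\Pi_1'$, $W$, and $\Pi_2'$ is built in: $\Pi_1',\Pi_2'\subset W^{\perp_U}$ by definition, and $\Pi_1'\perp_U\Pi_2'$ because $\Pi_1'\subset\Pi_1$ and $\Pi_2'\subset\Pi_1^{\perp_U}$.

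The main obstacle I expect is the spanning property $W^{\perp_U}\subset\Pi_1'+\Pi_2'$. For this I would invoke the closed-range identity $X^{\perp_U}=\Pi_1+\Pi_2$ (in general $X^{\perp_U}=\overline{\Pi_1^{\perp_U}+\Pi_2^{\perp_U}}=\overline{\Pi_1+\Pi_2}$; the closure is superfluous when $X$ is finite-dimensional and $\Pi_1,\Pi_2$ are closed, as here). Any $g\in W^{\perp_U}\subset X^{\perp_U}$ then decomposes as $g=\phi_1+\phi_2$ with $\phi_i\in\Pi_i$, and the requirement $g\in Y^{\perp_U}$ forces $[\phi_1,y_j]=-[\phi_2,y_j]=:\mu_j$; setting $\phi_1':=\phi_1-\sum_j\mu_j x_j\in\Pi_1'$ and $\phi_2':=\phi_2+\sum_j\mu_j x_j\in\Pi_2'$ exhibits $g=\phi_1'+\phi_2'$. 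Combined with $\Pi=W\oplus W^{\perp_U}$, this yields the claimed $U$-orthogonal decomposition $\Pi=\Pi_1'\oplus(X\dot{+}Y)\oplus\Pi_2'$.
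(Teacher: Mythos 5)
The paper does not prove this statement; it is cited verbatim as Theorem~3.4 of Iohvidov--Krein--Langer \cite{iohvidov}, and the only commentary the paper adds is that the result, originally stated for regular Pontryagin spaces, extends to the case where $U^{-1}$ is merely densely defined once $\Pi_1$ is finite-dimensional. Your proposal supplies the missing proof, and it is correct. You construct $Y$ by the standard skew-linking device: pass from a biorthogonal system $z_1,\dots,z_n$ (whose existence follows from nondegeneracy of $[\cdot,\cdot]$, i.e.\ $\ker U=\{0\}$) to the neutralized vectors $y_j = z_j - \tfrac12\sum_k C_{jk}x_k$; the Gram matrix of $X\dot{+}Y$ is the hyperbolic block, hence nondegenerate, giving $\Pi = W\oplus W^{\perp_U}$; then $\Pi_i' := \Pi_i\cap W^{\perp_U}$ does the job. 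The only place that actually uses more than nondegeneracy is the spanning step $W^{\perp_U}\subset \Pi_1'+\Pi_2'$, which rests on $X^{\perp_U} = \Pi_1+\Pi_2$; here the crucial facts are the biorthogonality $\Pi_2^{\perp_U}=\Pi_1$ and closedness of $\Pi_1+\Pi_2$. Your stated justification ("the closure is superfluous when $X$ is finite-dimensional and $\Pi_1,\Pi_2$ are closed") is slightly off: finite-dimensionality of $X$ alone does not give closedness of $\Pi_1+\Pi_2$ (two closed subspaces with trivial intersection can still have non-closed sum). What does give it, and what also gives $\Pi_2^{\perp_U}=(U\Pi_2)^\perp=\Pi_1$ without invoking boundedness of $U^{-1}$, is finite-dimensionality of $\Pi_1$ itself --- exactly the hypothesis under which the paper applies the theorem in Proposition~\ref{index}, and exactly the remark the authors make when they say the unbounded-$U^{-1}$ case "easily extends" for finite-dimensional $\Pi_1$. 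With that one clarification your argument is a complete and self-contained proof of the result in the setting the paper actually uses it.
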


The theorem was originally formulated for the case of {\bf regular}
Pontryagin spaces, where the quadratic form $U$ is a bounded
operator with bounded inverse. Under the assumption that $\Pi_1$ is
finite-dimensional, the result easily extends to the situation where
the inverse of $U$ is unbounded but densely defined. Although the
above decomposition is not unique in general, it yields the
following addition formula for instability indices:

\begin{proposition}
\label{index} Let $\Pi_1$ be a finite-dimensional
subspace of $\Pi$, and let $\Pi_2$ be its $U$-orthogonal complement,
Then its instability index is given by
$$\kappa (U) = \kappa(U\vert_{\Pi_1}) + \kappa(U\vert_{\Pi_2}) +
\dim(\Pi_1\cap \Pi_2)\,.$$
\end{proposition}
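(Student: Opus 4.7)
The plan is to apply Theorem~\ref{langer} and read off the instability index summand by summand from the $U$-orthogonal decomposition it provides. Since every summand in the decomposition
$$
\Pi = \Pi_1' \oplus (X \dot{+} Y) \oplus \Pi_2'
$$
is $U$-orthogonal to the others, the indefinite quadratic form $[\cdot,\cdot]$ splits as a direct sum and its instability index is additive across summands, so
$$
\kappa(U) = \kappa(U|_{\Pi_1'}) + \kappa\bigl(U|_{X\dot{+}Y}\bigr) + \kappa(U|_{\Pi_2'}).
$$
The proposition will follow once I identify each of these three terms.

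First I would handle the ``hyperbolic'' middle piece. Since $X$ and $Y$ are neutral subspaces that are skewly linked, the form $[\cdot,\cdot]$ is non-degenerate on $X\dot{+} Y$ and pairs $X$ with $Y$ perfectly; picking bases $\{x_1,\dots,x_d\}$ of $X$ and a dual basis $\{y_1,\dots,y_d\}$ of $Y$ with $[x_i, y_j]=\delta_{ij}$, the Gram matrix of $U|_{X\dot{+}Y}$ is $\bigl(\begin{smallmatrix} 0 & I \\ I & 0\end{smallmatrix}\bigr)$, whose eigenvalues are $\pm 1$, each with multiplicity $d$. Hence $\kappa(U|_{X\dot{+}Y})= d = \dim X = \dim(\Pi_1\cap \Pi_2)$.

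Next I would reduce the outer pieces to the original subspaces. Because $X\subset \Pi_1\cap\Pi_2$ and $\Pi_2 = \Pi_1^{\perp_U}$, every vector of $X$ is $U$-orthogonal to all of $\Pi_1$; thus $X$ sits in the radical of the restricted form $U|_{\Pi_1}$. Since the radical contributes nothing to the instability index of a Hermitian form, the decomposition $\Pi_1 = \Pi_1' \oplus X$ yields $\kappa(U|_{\Pi_1}) = \kappa(U|_{\Pi_1'})$. The same argument applied to $\Pi_2 = \Pi_2' \oplus X$ gives $\kappa(U|_{\Pi_2}) = \kappa(U|_{\Pi_2'})$. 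Assembling the three contributions proves the claim.

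The main obstacle is the additivity step: strictly speaking, $\Pi_2'$ is infinite-dimensional and $U$ is unbounded on $\Pi$, so one cannot just invoke Sylvester's law of inertia. I would justify additivity by noting that Theorem~\ref{langer} furnishes a $U$-orthogonal direct sum in which $\Pi_1'$ and $X\dot{+}Y$ are finite-dimensional; a maximal positive (respectively negative) subspace of $\Pi$ can then be constructed by concatenating maximal positive (negative) subspaces of each summand, and conversely any positive subspace of $\Pi$ projects to positive subspaces of the summands along the $U$-orthogonal splitting. This gives the required equality of dimensions and completes the proof.
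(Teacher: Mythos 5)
Your proposal is correct and follows essentially the same route as the paper: both invoke Theorem~\ref{langer} to split $\Pi$ into the $U$-orthogonal sum $\Pi_1' \oplus (X\dot{+}Y)\oplus\Pi_2'$, identify $\kappa(U|_{\Pi_i'})=\kappa(U|_{\Pi_i})$ by absorbing the radical $X$, and compute $\kappa(U|_{X\dot{+}Y})=\dim X$ from the hyperbolic pairing (the paper writes this as an explicit difference of squares rather than via eigenvalues of the Gram matrix, but the computations are interchangeable). Your extra remark justifying additivity of $\kappa$ across the $U$-orthogonal splitting despite the unbounded/infinite-dimensional setting is a point the paper leaves implicit, and it is a reasonable addition.
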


\begin{proof}
Theorem~\ref{langer} provides subspaces
$\Pi_1'$ and $\Pi_2'$ such that
$$\kappa (U) = \kappa(U\vert_{\Pi_1'}) + \kappa(U\vert_{\Pi_2'}) +
\kappa(U\vert_{X \dot{+} Y)})\,.
$$
By construction, we have
$\kappa(U\vert_{\Pi_1'})=\kappa(U\vert_{\Pi_1})$
and $\kappa(U\vert_{\Pi_2'})=\kappa(U\vert_{\Pi_2})$.
It remains to compute $\kappa(U\vert_{X \dot{+} Y)})$.

Since $X$ and $Y$ are skewly linked and finite-dimensional,
there exists for each basis $\phi_1, \phi_2, ... \phi_m$ of $X$ a
basis $\psi_1, \psi_2, ... \psi_m$ of $Y$ such that $[\phi_i,\psi_j] =
\delta_{ij}$ ($i,j= 1, ... , m$). By expanding
an arbitrary element
$h\in X\dot{+}Y$ as
$$ h = \sum\limits_{i=1}^{m} \alpha_i \phi_i +
\sum\limits_{j=1}^{m} \beta_j \psi_j\,, $$
the indefinite inner product can be expressed as
$$ [h,h] =  2 \sum\limits_{i=1}^{m} \alpha_i\beta_i
= \frac{1}{2}\left(\sum\limits_{i=1}^{m} (\alpha_i + \beta_i)^2
     - \sum\limits_{i=1}^{m} (\alpha_i - \beta_i)^2 \right)\,. $$
This is an explicit representation of the indefinite inner product
in terms of positive and negative squares, which
shows that $\kappa(U\vert_{X\dot{+}Y}) = \dim(X)$.
\end{proof}

\section{Restriction to finite dimensions}
\label{sec:cutoff}

We first prove the claim in Eq.~(\ref{eq:main-1}).

\begin{proposition} [Projecting out high Fourier modes]
\label{prop:U22}
Let $A$ be given by Eq.~(\ref{eq:def-A}). Assume that
the spectra of $A$ and $-A^*$ are disjoint, and
let $U(x,y)$ be the kernel of the
unique solution of Lyapunov's equation was
constructed in Section~\ref{sec:PDE}.  If
\begin{equation}\label{eq:N-condition1}
N^2> M|||U|||\,,
\end{equation}
where $M$ is the constant from Eq.~(\ref{eq:def-M}), then
$$
\kappa(A) =\kappa(P_NU^{-1}P_N)\,.
$$
\end{proposition}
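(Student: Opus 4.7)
The plan is to apply the addition formula of Proposition~\ref{index} to the (possibly unbounded) self-adjoint operator $U^{-1}$, with $\Pi_1 := \mathrm{Range}(P_N)$ as a subspace of dimension $2N-1$. Since $\kappa(A) = \kappa(U) = \kappa(U^{-1})$ by Theorem~\ref{belonosov-theorem}, the formula reads
$$\kappa(A) = \kappa(U^{-1}|_{\Pi_1}) + \kappa(U^{-1}|_{\Pi_2}) + \dim(\Pi_1 \cap \Pi_2),$$
where $\Pi_2 = \Pi_1^{\perp_{U^{-1}}} = \{g : U^{-1}g \in \mathrm{Range}(I-P_N)\} = U(\mathrm{Range}(I-P_N))$. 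Observing that $\kappa(U^{-1}|_{\Pi_1}) = \kappa(P_N U^{-1} P_N)$ by direct restriction, the task reduces to showing that the hypothesis $N^2 > M|||U|||$ forces the remaining two terms to vanish.

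The crux is strict negativity of $\langle U^{-1}g, g\rangle$ on nonzero $g \in \Pi_2$. Writing $g = Uh$ with $h \in \mathrm{Range}(I-P_N)$ and decomposing $U = U_0 + K$, the identity $\langle U^{-1}g, g\rangle = \langle h, Uh\rangle$ reduces matters to bounding $\langle h, Uh\rangle$. A direct Fourier calculation from (\ref{eq:def-U0}) gives the exact expression $\langle U_0 h, h\rangle = -\tfrac{1}{2}\|h\|_{H^{-2}}^2$. For the perturbation term, I use that $h = (I-P_N)h$ to rewrite $\langle Kh, h\rangle = \langle (I-P_N)K(I-P_N)h, h\rangle$. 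The Fourier support of $(I-P_N)K(I-P_N)$ (both indices $\geq N$) is contained in that of $K - P_N K P_N$ (at least one index $\geq N$), so monotonicity of the auxiliary norm under Fourier restriction combined with Lemma~\ref{lem:tail} yields $|||(I-P_N)K(I-P_N)||| \leq M N^{-2}|||U|||$. The $H^{-2}\to H^2$ bound from the auxiliary-norm lemma then gives
$$|\langle Kh, h\rangle| \leq \tfrac{1}{2}|||(I-P_N)K(I-P_N)|||\,\|h\|_{H^{-2}}^2 \leq \tfrac{M|||U|||}{2N^2}\|h\|_{H^{-2}}^2,$$
whence $\langle Uh, h\rangle \leq \tfrac{1}{2}\bigl(M|||U|||/N^2 - 1\bigr)\|h\|_{H^{-2}}^2 < 0$ under the hypothesis.

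Strict negativity on $\Pi_2$ immediately gives $\kappa(U^{-1}|_{\Pi_2}) = 0$ and also forces $\Pi_1 \cap \Pi_2 = \{0\}$: any $g$ in the intersection is neutral for the form, but the same estimate applied to $g = Uh$ would yield $\langle U^{-1}g, g\rangle = \langle h, Uh\rangle < 0$ unless $h = 0$, hence $g = 0$. The addition formula then collapses to $\kappa(A) = \kappa(U^{-1}|_{\Pi_1}) = \kappa(P_N U^{-1} P_N)$. The main obstacle is the tail estimate in the second paragraph: the key is to exploit that the auxiliary norm depends only on moduli of Fourier coefficients, so that Lemma~\ref{lem:tail} (which controls $K - P_N K P_N$) can be transferred to the smaller block $(I-P_N) K (I-P_N)$ and then converted into a coercivity bound for the quadratic form on $\mathrm{Range}(I-P_N)$.
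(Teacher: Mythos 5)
Your proof is correct and is essentially the paper's argument transported through the isomorphism $\phi\mapsto U\phi$: the paper applies the addition formula in the $U$-Pontryagin space with $\Pi_2=\mathrm{Range}(I-P_N)$ and then identifies $\kappa(U\vert_{\Pi_1})$ with $\kappa(P_N U^{-1}P_N)$ via the bijection $U\colon\Pi_1\to\mathrm{Range}(P_N)$, whereas you apply it in the $U^{-1}$-Pontryagin space with $\Pi_1=\mathrm{Range}(P_N)$ and read off $\kappa(P_N U^{-1}P_N)$ directly. The key estimate is identical in both: negativity of $\langle Uh,h\rangle$ on $\mathrm{Range}(I-P_N)$, obtained from $\langle U_0h,h\rangle=-\tfrac12\|h\|_{H^{-2}}^2$, the tail bound of Lemma~\ref{lem:tail} applied to $(I-P_N)K(I-P_N)$, and the $H^{-2}\to H^2$ control furnished by the auxiliary norm.
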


\begin{proof}  By Theorem~\ref{belonosov-theorem},
we have $\kappa(A)=\kappa(U)$.
Let $[\phi,\psi]=\langle U\phi,\psi\rangle$
be the indefinite inner product associated with $U$.
Choose $\Pi_2$ to be the range of $I-P_N$,
and let $\Pi_1=\Pi_2^{\perp_U}$ be its $U$-orthogonal complement.
We will show that
\begin{equation}
\label{eq:restrict-pi1}
\kappa(U)=\kappa(U\vert_{\Pi_1})\,.
\end{equation}
This will establish the conclusion, because
$$
\langle P_N U^{-1}P_N\phi,\phi\rangle = [U^{-1}P_N\phi, U^{-1}P_N\phi]\,,
$$
and $U$ maps $\Pi_1$ isomorphically onto the range of $P_N$.

Let us apply $[\cdot,\cdot]$ to $D^2 \phi$, where $\phi\in H^2$
and $D$ is given by Eq.~(\ref{eq:def-D}). Writing
$U=U_0+K$, and using that $D^2U_0D^2=-\frac{1}{2}I$, we
see that
$$
\bigl[ D^2\phi, D^2\phi]
= \bigl\langle (U_0+K)D^2\phi, D^2\phi\rangle
= -\frac{1}{2}||\phi||^2 + \langle D^2KD^2 \phi,\phi \rangle\,.
$$
We replace $\phi$ with $(I-P_N)\phi$, and use Lemma~\ref{lem:tail}
to obtain
\begin{eqnarray*}
\bigl[ D^2(I-P_N)\phi, D^2(I-P_N)\phi]
&\le& -\frac{1}{2}\bigl( 1- |||(I-P_N)K(I-P_N)|||\bigr)||\phi||^2\\
&\le& -\frac{1}{2}( 1- \eps_N)||(I-P_N)\phi||^2\,.
\end{eqnarray*}
where $\eps_N=MN^{-2}|||U|||<1$.
It follows that
\begin{equation}
\label{eq:U2-negative}
U\big\vert_{\Pi_2} \le (1-\eps_N) \, U_0\big\vert_{\Pi_2}< 0
\end{equation}
as quadratic forms on $\Pi_2$. In particular,
$\kappa(U\vert_{\Pi_2})=0$, $\Pi_1\cap \Pi_2=\emptyset$,
and Eq.~(\ref{eq:restrict-pi1})
follows with Theorem~\ref{langer}.
\end{proof}

For our final result, we want to
replace $\Pi_1$ by the range of the projection $P_N$
from Eq.~(\ref{eq:def-P}). The next two lemmas
concern the restriction  of $U$ to the range of $P_N$.

\begin{lemma} [Lyapunov equation for $P_NAP_N$]
\label{lem:truncate} Let $A$ be given by Eq.~(\ref{eq:def-A}).
Assume that the spectrum of $A$ and $-A^*$ are disjoint, and
let $U(x,y)$ be the kernel of the
unique solution of Lyapunov's equation that we
constructed in Section~\ref{sec:PDE}.  If
$$
N^4 > M^2|||U|||\,,
$$
then
\begin{equation}\label{eq:Lyap-N}
(P_NAP_N)^*(P_NUP_N) + (P_NUP_N)(P_NAP_N)\ge c_NP_N\,,
\end{equation}
where $c_N=1-M^2N^{-4}|||U|||$. In particular,
$$
\kappa(P_N AP_N)=\kappa(P_N UP_N)\,.
$$
\end{lemma}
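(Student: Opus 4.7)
The plan is to show that $(P_NAP_N)^*(P_NUP_N)+(P_NUP_N)(P_NAP_N)$ equals $P_N$ minus a small perturbation $E$, estimate $\|E\|_{L^2\to L^2}\le 1-c_N$, and then identify the instability indices via a finite-dimensional Lyapunov/Taussky argument on $\ran P_N$.

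\medskip

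First I would expand, using Lyapunov's equation $A^*U+UA=I$:
\[
(P_NAP_N)^*(P_NUP_N)+(P_NUP_N)(P_NAP_N) = P_N(A^*U+UA)P_N - E = P_N - E,
\]
with error
\[
E = P_N A^*(I{-}P_N)UP_N + P_N U(I{-}P_N)AP_N.
\]
Because $D^4$ and $U_0=-\tfrac12 D^{-4}$ both commute with $P_N$, one can freely replace $A,A^*,U$ here by $B:=A+D^4$, $B^*=A^*+D^4$, and $K:=U-U_0$, leaving
\[
E = P_N B^*(I{-}P_N) K P_N + P_N K (I{-}P_N) B P_N = E_1 + E_1^*,
\]
the last identity using $K=K^*$.

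\medskip

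The main estimate is $\|E\|_{L^2\to L^2}\le M^2 N^{-4}|||U|||$. For $\phi\in\ran P_N$, write
\[
\langle E\phi,\phi\rangle = 2\,\Re\bigl\langle (I{-}P_N)KP_N\phi,\,(I{-}P_N)BP_N\phi\bigr\rangle
\]
and apply Cauchy--Schwarz in $L^2$. The first factor is controlled by Lemma~\ref{lem:tail}: since $|||(I{-}P_N)KP_N|||\le MN^{-2}|||U|||$ and since any integral operator $F$ with Fourier support in $\{|p|\ge N\}$ satisfies $\|F\|_{L^2\to L^2}\le|||F|||/(2+N^4)$ (the weight $(2+p^4+q^4)$ in the definition of $|||\cdot|||$ is always at least $2+N^4$ on that support), one obtains
\[
\|(I{-}P_N)KP_N\phi\|_{L^2}\le \frac{MN^{-2}|||U|||}{2+N^4}\,\|\phi\|_{L^2}.
\]
The second factor is bounded by the Bernstein inequality $\|P_N\phi\|_{H^2}\le N^2\|\phi\|_{L^2}$ combined with the fact that $\|B\|_{H^2\to L^2}\le M$ (whose proof is parallel to Lemma~\ref{lem:Leibnitz}), giving $\|(I{-}P_N)BP_N\phi\|_{L^2}\le MN^2\|\phi\|_{L^2}$. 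Multiplying the two factors yields the bound, modulo a small combinatorial constant coming from the symmetrization $E=E_1+E_1^*$ that must be absorbed by the sharper form of the tail estimate restricted to the support $\{|p|\ge N,\,|q|<N\}$. This is the technical crux: the $N^{-2}$ decay from the tail estimate and the $N^{-4}$ decay from the Fourier-support-induced bound on $|||F|||/\|F\|$ must conspire against the $N^2$ loss from Bernstein.

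\medskip

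Once $\|E\|<1$, the inequality $P_N-E\ge c_NP_N$ is immediate. Equation~(\ref{eq:Lyap-N}) is then a finite-dimensional Lyapunov equation on $\ran P_N$ with positive-definite right-hand side $V=P_N-E$. A kernel vector of $P_NUP_N$ would annihilate both sides of Eq.~(\ref{eq:Lyap-N}), which contradicts $V>0$; so $P_NUP_N$ is invertible. Positivity of $V$ also precludes purely imaginary eigenvalues of $P_NAP_N$. Substituting the similarity $\widetilde A=V^{1/2}(P_NAP_N)V^{-1/2}$ and the congruence $\widetilde U=V^{-1/2}(P_NUP_N)V^{-1/2}$ reduces Eq.~(\ref{eq:Lyap-N}) to $\widetilde A^*\widetilde U+\widetilde U\widetilde A=I$ on $\ran P_N$, to which Taussky's Theorem~\ref{Taussky-theorem} applies. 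Since similarity preserves spectra and congruence preserves inertia, $\kappa(\widetilde A)=\kappa(P_NAP_N)$ and $\kappa(\widetilde U)=\kappa(P_NUP_N)$, yielding the second claim.
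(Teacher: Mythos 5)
Your overall strategy matches the paper's: block--decompose $A$ and $U$ against $\ran P_N$ and its complement, observe that $P_NUP_N$ solves a finite-dimensional Lyapunov equation $A_{11}^*U_{11}+U_{11}A_{11}=P_N-E$ with the same error $E=A_{12}^*U_{21}+U_{12}A_{21}$ that appears in the paper, reduce to $B=A+D^4$ and $K=U-U_0$ since $D^4$ and $U_0$ commute with $P_N$, and then apply Taussky once $\|E\|<1$. The Taussky/inertia argument at the end is fine. The gap is in the estimate of $\|E\|$.

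You assert $\|B\|_{H^2\to L^2}\le M$ ``whose proof is parallel to Lemma~\ref{lem:Leibnitz}.'' It is not. Lemma~\ref{lem:Leibnitz} bounds $\|A^*+D^4\|_{H^2\to L^2}$, where the operator acts as $-a\phi''-b\phi'-(c-1)\phi$ and all derivatives fall on $\phi$, so only $\|a\|_{L^\infty},\|b\|_{L^\infty},\|c-1\|_{L^\infty}$ enter. For $B=A+D^4$, the operator is $-(a\phi)''+(b\phi)'-(c-1)\phi$, and the Leibniz rule produces $a''\phi$ and $2a'\phi'$. Neither $\|a''\|_{L^\infty}$ nor $\|a'\|_{L^\infty}$ is controlled by $\|a\|_{H^1}$, so $\|B\|_{H^2\to L^2}$ is not bounded by the constant $M$ of Eq.~(\ref{eq:def-M}); indeed for $a\in H^1$ with $a''\notin L^2$ the operator $B$ does not even map $H^2$ into $L^2$. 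Equivalently, in Fourier the matrix element of $B$ carries the weight $(1+p^4)^{1/2}$ on the \emph{output} index $p$ (with $|p|\ge N$ unbounded after applying $I-P_N$), which Bernstein's $N^2$ on the \emph{input} index $|m|<N$ does nothing to tame. So the Cauchy--Schwarz pairing in $L^2\times L^2$ breaks down at this step.

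There is a clean repair, which keeps your Cauchy--Schwarz plan but uses the dual pairing $H^2\times H^{-2}$ instead of $L^2\times L^2$: since $B=(A^*+D^4)^*$, one has $\|B\|_{L^2\to H^{-2}}=\|A^*+D^4\|_{H^2\to L^2}\le M$, which gives $\|(I-P_N)BP_N\phi\|_{H^{-2}}\le M\|\phi\|_{L^2}$ with no loss of $N^2$; and on the other side $\|(I-P_N)KP_N\phi\|_{H^2}\le N^{-2}|||(I-P_N)KP_N|||\,\|\phi\|\le MN^{-4}|||U|||\,\|\phi\|$ by Lemma~\ref{lem:tail} and the elementary inequality $(1+p^4)^{1/2}/(2+p^4+q^4)\le N^{-2}$ on $\{|p|\ge N\}$. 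This gives $\|E\|\le 2M^2N^{-4}|||U|||$, hence $c_N=1-2M^2N^{-4}|||U|||$ under $N^4>2M^2|||U|||$. Note the factor $2$: your remark that the symmetrization factor ``must be absorbed by the sharper form of the tail estimate restricted to $\{|p|\ge N,\ |q|<N\}$'' is not justified --- the supremum of $(1+p^4)^{1/2}+(1+q^4)^{1/2}$ over $2+p^4+q^4$ on the restricted cone is still attained near $(p,q)=(N,0)$ and is not halved. The paper avoids the factor of $2$ with a single Fourier-side estimate that treats the two cross terms jointly rather than by Cauchy--Schwarz; with your split you should expect to land on the weaker constant and weaker hypothesis on $N$, which still suffices for the qualitative conclusion but does not reproduce the stated $c_N$.
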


\begin{proof} For $\phi\in L^2$, we
write  $\phi_1=P_N\phi$, $\phi_2=(I-P_N)\phi$
and decompose
$$
A =  \left(\begin{array}{cc} A_{11} & A_{12}\\
A_{21} & A_{22}\end{array}\right)\,,\quad
U =  \left(\begin{array}{cc} U_{11} & U_{12}\\
U_{21} & U_{22}\end{array}\right)\,.
$$
From Eq.~(\ref{eq:Lyapunov}), we see that $U_{11}$ solves
Lyapunov's equation
$$
A_{11}^*U_{11} + U_{11} A_{11} = V
$$
with $ V=I_{11} - A^*_{12}U_{21} - U_{12}A_{21}$.
We claim that the right hand
side is positive definite on the range of $P_N$.

To prove this claim,
first observe that we can replace $A$ by $A-A_0$ and $U$ by $K=U-U_0$
in the definition of $V$, because $A_0$ and $U_0$ are
diagonal in the Fourier representation.
We estimate
\begin{eqnarray*}
||A^*_{12}U_{21}+ U_{12}A_{21}||_{L^2\to L^2}
&=& ||P_N(A^*-A_0)(I-P_N)KP_N+ P_NK(I-P_N)(A-A_0)P_N||_{L^2\to L^2}\\[0.1cm]
&\le& \sup_{|p|\ge N \ \mbox{\scriptsize or}\ |q|\ge N}
\frac{(1+p^4)^{1/2}+ (1+q^4)^{1/2}}{2+p^4+q^4}\, M |||K-P_NKP_N|||\\[0.2cm]
&\le& M^2N^{-4}|||U|||
\end{eqnarray*}
by Lemma~\ref{lem:tail}.
It follows that $V \ge c_N P_N >0$
as quadratic forms on the range of $P_N$.
Since $P_NUP_N$ is a finite matrix,
the conclusion of the lemma
follows with Taussky's theorem.
\end{proof}

\begin{lemma} \label{lem:inverse}
Under the assumptions of the previous lemma, $P_NUP_N$ is
invertible on the range of $P_N$, and
$$
||\bigl(D^2P_NUP_ND^2\bigr)^{-1}||_{L^2\to L^2}
\le 2 \frac{1+M}{c_N}\,.
$$
\end{lemma}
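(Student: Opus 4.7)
The plan is to recast the claim as a Sobolev operator estimate on $L^{-1}$, extract this estimate from a ``dual'' Lyapunov equation, and then pass to the target norm by duality and interpolation. Writing $L:=P_NUP_N$, the change of variable $g=D^2\phi$ identifies
\[
\|(D^2LD^2)^{-1}\|_{L^2\to L^2} \;=\; \|D^{-2}L^{-1}D^{-2}\|_{L^2\to L^2} \;=\; \|L^{-1}\|_{H^2\to H^{-2}}\,,
\]
reducing the claim to $\|L^{-1}\|_{H^2\to H^{-2}}\le 2(1+M)/c_N$. Since Lemma~\ref{lem:truncate} ensures that $L$ is invertible on $\ran P_N$ (with Lyapunov equation $(P_NAP_N)^*L+L(P_NAP_N)=V$ and $V\ge c_NP_N$), I multiply that equation by $L^{-1}$ on both sides to obtain the dual identity $(P_NAP_N)L^{-1}+L^{-1}(P_NAP_N)^* = L^{-1}VL^{-1}$ on $\ran P_N$.

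For $\phi\in\ran P_N$, pairing this identity with $\phi$ and using $V\ge c_N P_N$ yields
\[
c_N\|L^{-1}\phi\|_{L^2}^2\;\le\;2\Re\langle L^{-1}\phi,(P_NAP_N)^*\phi\rangle\;\le\;2\|L^{-1}\phi\|_{L^2}\,\|(P_NAP_N)^*\phi\|_{L^2}\,.
\]
On $\ran P_N$, $(P_NAP_N)^*=-D^4+P_N(A^*+D^4)P_N$; Lemma~\ref{lem:Leibnitz} gives $\|(A^*+D^4)\phi\|_{L^2}\le M\|\phi\|_{H^2}\le M\|\phi\|_{H^4}$ since the Sobolev norms $\|\cdot\|_{H^s}$ are monotone in $s$. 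Hence $\|(P_NAP_N)^*\phi\|_{L^2}\le (1+M)\|\phi\|_{H^4}$, and dividing through by $\|L^{-1}\phi\|_{L^2}$ produces $\|L^{-1}\|_{H^4\to L^2}\le 2(1+M)/c_N$.

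To conclude, I use that $L$ (and hence $L^{-1}$) is self-adjoint on $\ran P_N$: duality gives $\|L^{-1}\|_{L^2\to H^{-4}}\le 2(1+M)/c_N$. Complex interpolation on the Hilbert Sobolev scale, with $\theta=1/2$, identifies $[H^4,L^2]_{1/2}=H^2$ and $[L^2,H^{-4}]_{1/2}=H^{-2}$, and yields $\|L^{-1}\|_{H^2\to H^{-2}}\le 2(1+M)/c_N$, which is the desired bound. The main obstacle is exactly this interpolation step: a direct Cauchy-Schwarz manipulation in the Lyapunov inequality cannot deliver the $H^2\to H^{-2}$ bound, because the only available inequality, $\|f\|_{L^2}^2\le \|f\|_{H^{-2}}\|f\|_{H^2}$, upper-bounds $\|f\|_{L^2}$ rather than $\|f\|_{H^{-2}}$ and goes the wrong way for substitution. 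Interpolation (or, equivalently for this finite-dimensional operator, a Hadamard three-lines argument applied to $L^{-1}$ on $\ran P_N$) appears to be essential to bridge the $H^4\to L^2$ and $L^2\to H^{-4}$ bounds.
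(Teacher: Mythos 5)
Your argument is correct, and it reaches the stated constant by a genuinely different route than the paper. The paper's proof conjugates the truncated Lyapunov inequality~(\ref{eq:Lyap-N}) by $D^2$, so that $U_N=D^2P_NUP_ND^2$ and $A_N=D^{-2}P_NAP_ND^2$ satisfy $A_N^*U_N+U_NA_N\ge c_ND^4P_N$ on the range of $P_N$, and then tests this on an eigenfunction $\phi_0$ of the self-adjoint finite matrix $U_N$: the left-hand side collapses to $2\lambda_0\,\Re\langle A_N\phi_0,\phi_0\rangle$, and with $\psi_0=D^2\phi_0$ and $\|A^*D^{-4}\|_{L^2\to L^2}\le 1+M$ one obtains $|\lambda_0|\ge c_N/(2(1+M))$ for every eigenvalue, i.e.\ the claimed bound on $\|U_N^{-1}\|$ (the final display of the paper's proof writes $\|U_N\|$ where $\|U_N^{-1}\|$ is meant). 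So your closing assertion that interpolation is essential is not quite accurate: testing on eigenvectors of the symmetrized operator $D^2LD^2$ is exactly the direct manipulation that bridges the gap, since it turns the troublesome quantity into $|\lambda_0|$ on both sides at once. Your route --- inverting the Lyapunov identity to get $\|L^{-1}\|_{H^4\to L^2}\le 2(1+M)/c_N$, dualizing via self-adjointness of $L^{-1}$, and interpolating at $\theta=1/2$ --- is nonetheless sound: the Sobolev norms used here are weighted $\ell^2$ norms whose complex interpolation at $\theta=1/2$ reproduces $H^{\pm 2}$ with constant one (equivalently, a three-lines argument applied to $z\mapsto D^{-4(1-z)}L^{-1}P_ND^{-4z}$, using that $D^{it}$ is unitary), and it yields the identical constant. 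What your version buys is independence from diagonalizing $U_N$, so it would survive in settings where the truncation is not a finite self-adjoint matrix; what it costs is the extra interpolation machinery. One small bookkeeping point common to both arguments: the invertibility of $P_NUP_N$ on the range of $P_N$, asserted in the lemma, follows directly from~(\ref{eq:Lyap-N}) (a kernel vector would make the left side vanish against itself while the right side is at least $c_N\|\phi\|^2$); citing Lemma~\ref{lem:truncate} and Taussky's theorem, as you do, is also acceptable.
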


\begin{proof}  Let us write
$A_N=D^{-2}P_NAP_ND^2$ and $U_N=D^2P_NUP_ND^2$.
By Eq.~(\ref{eq:Lyap-N}), we have
$$
A_N^*U_N + U_N A_N \ge c_N D^4 P_N
$$
as quadratic forms on the range of $P_N$.
Here, $c_N=1-M^2N^{-4}|||U|||$, as in Lemma~\ref{lem:truncate}.
We apply this inequality to an eigenfunction
$\phi_0$ of $U_N$
$$
2\lambda_0  \Re \langle A_N^*\phi_0,\phi_0\rangle \ge  c_N
\langle D^2\phi_0,D^2\phi_)\rangle\,,
$$
where $\lambda_0$ is the corresponding eigenvalue.
Writing $\phi_0=D^{-2}\psi_0$, and using once more
Lemma~\ref{lem:tail}, we conclude that
$$
||U_N||_{L^2\to L^2}
\le 2 \sup_{\psi\in L^2}
    \frac{ \Re \langle A_N^*D^{-2} \psi,D^{-2}\psi \rangle}
{c_N||\psi||_{L^2}^2}
\le \frac{2}{c_N}||A^*D^{-4}||_{L^2\to L^2} \le
2\frac{1+M}{c_N}\,.
$$
\end{proof}

We are finally ready for our main result.

\begin{proposition} [Projection onto trigonometric polynomials]
\label{prop:main} Let $A$ be  a differential operator given by
Eq.~(\ref{eq:def-A}). Assume that the spectra of $A$ and $-A^*$ are
disjoint, and let $U(x,y)$ be the unique weak solution of
Eq.~(\ref{eq:PDE-U}) in ${\cal H}^2$.  If
\begin{equation}\label{eq:N-condition2}
N^2 > M \bigl( 1 + \sqrt{1+M}\bigr) |||U|||\,,
\end{equation}
where $M$ is given by Eq.~(\ref{eq:def-M}), then
$$
\kappa(A)=\kappa(P_NAP_N)\,.
$$
\end{proposition}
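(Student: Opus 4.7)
The plan is to apply the addition formula of Proposition~\ref{index} to the subspace $\Pi_1 = \mathrm{Range}(P_N)$ and its $U$-orthogonal complement $\Pi_2 = \{\phi : P_NU\phi=0\}$, and then chain with Lemma~\ref{lem:truncate} and Theorem~\ref{belonosov-theorem} to reach
\begin{equation*}
\kappa(A) = \kappa(U) = \kappa(P_NUP_N) = \kappa(P_NAP_N)\,.
\end{equation*}
The hypothesis $N^2 > M(1+\sqrt{1+M})|||U|||$, combined with $|||U|||\ge 1$ from Lemma~\ref{lem:Uge1}, implies both $N^2 > M|||U|||$ and $N^4 > M^2|||U|||$, so the hypotheses of Proposition~\ref{prop:U22} and Lemma~\ref{lem:truncate} are in force.

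For the addition formula to collapse to $\kappa(U) = \kappa(U\vert_{\Pi_1}) = \kappa(P_NUP_N)$, I would need to verify (a) $\Pi_1\cap\Pi_2=\{0\}$ and (b) $\kappa(U\vert_{\Pi_2}) = 0$. Fact (a) is immediate from Lemma~\ref{lem:inverse}, which makes $P_NUP_N$ invertible on $\mathrm{Range}(P_N)$. For (b), I would decompose $\phi\in\Pi_2$ as $\phi=\phi_1+\phi_2$ with $\phi_i$ in the range of $P_N$ and $I-P_N$, respectively, and write $U$ in block form. The constraint $P_NU\phi=0$ reads $U_{11}\phi_1+U_{12}\phi_2=0$, and substituting $\phi_1=-U_{11}^{-1}U_{12}\phi_2$ yields the Schur complement identity
\begin{equation*}
\langle U\phi,\phi\rangle = \langle U_{22}\phi_2,\phi_2\rangle - \langle U_{11}^{-1}U_{12}\phi_2,\,U_{12}\phi_2\rangle\,.
\end{equation*}

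Three ingredients, all already in the paper, would estimate this Schur complement. First, the calculation inside the proof of Proposition~\ref{prop:U22} actually establishes $\langle U_{22}\phi_2,\phi_2\rangle \le -\tfrac12(1-\rho)\|\phi_2\|_{H^{-2}}^2$ with $\rho = MN^{-2}|||U|||$. Second, since $U_0$ is Fourier-diagonal, $U_{12} = P_NK(I-P_N)$ is a sub-block of $K-P_NKP_N$, so the tail estimate (Lemma~\ref{lem:tail}) together with the inequality $\|F\|_{H^{-2}\to H^2}\le \tfrac12|||F|||$ from the auxiliary-norm lemma gives $\|U_{12}\phi_2\|_{H^2} \le \tfrac{\rho}{2}\,\|\phi_2\|_{H^{-2}}$. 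Third, Lemma~\ref{lem:inverse} rearranges to $\|U_{11}^{-1}\|_{H^2\to H^{-2}} \le 2(1+M)/c_N$ with $c_N = 1-M^2N^{-4}|||U|||$. Pairing these through the $H^{-2}$--$H^2$ duality yields
\begin{equation*}
\langle U\phi,\phi\rangle \le -\tfrac12\Bigl[(1-\rho) - \tfrac{(1+M)\rho^2}{c_N}\Bigr]\|\phi_2\|_{H^{-2}}^2\,.
\end{equation*}

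It remains to show that the bracket is strictly positive under the hypothesis. Since $|||U|||\ge 1$, one has $c_N\ge 1-\rho^2\ge 1-\rho$, so $(1-\rho)c_N\ge (1-\rho)^2$ and positivity reduces to $(1-\rho)^2 > (1+M)\rho^2$, equivalently $\rho(1+\sqrt{1+M}) < 1$, which is precisely the hypothesis. Because $\phi_2=0$ also forces $\phi_1=0$, this gives $\langle U\phi,\phi\rangle < 0$ for every nonzero $\phi\in\Pi_2$, so $\kappa(U\vert_{\Pi_2})=0$. Proposition~\ref{index} then delivers $\kappa(U) = \kappa(P_NUP_N)$, and invoking Lemma~\ref{lem:truncate} together with Theorem~\ref{belonosov-theorem} closes the chain.

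The hard part will be the Schur complement estimate: three bounds live at three different Sobolev scales---the $H^{-2}$-level negativity of $U_{22}$, the $H^{-2}\to H^2$ decay of $U_{12}$, and the $H^2\to H^{-2}$ inverse bound on $U_{11}$---and matching these scales through the duality pairing is what ultimately produces the factor $1+\sqrt{1+M}$ in the hypothesis.
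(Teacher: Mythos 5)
Your proposal is correct and follows essentially the same route as the paper: reduce to $\kappa(U)=\kappa(P_NUP_N)$ via the addition formula applied to $\Pi_1=\mathrm{Range}(P_N)$, compute the indefinite form on $\Pi_2$ through the Schur complement, and control its three pieces with the tail estimate (Lemma~\ref{lem:tail}), the negativity bound from Proposition~\ref{prop:U22}, and the inverse bound of Lemma~\ref{lem:inverse}. Your write-up is in fact slightly more careful than the paper's in two places: you explicitly verify $\Pi_1\cap\Pi_2=\{0\}$ before invoking Proposition~\ref{index}, and you spell out the inequality $(1-\rho)c_N\ge(1-\rho)^2$ that the paper leaves implicit when passing from the displayed bound to the conclusion of negativity.
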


\begin{proof}
Since $U$ solves Lyapunov's equation,
Theorem~\ref{belonosov-theorem} implies that
$\kappa(A)=\kappa(U)$, and we already
know from Lemma~\ref{lem:truncate}
that $\kappa(P_N AP_N) =\kappa(P_NUP_N)$.
We want to apply Proposition~\ref{index} in the case
where $\Pi_1$ is the range of $P_N$.

Since $|||U|||\ge 1$ by Lemma~\ref{lem:Uge1},
our assumption implies that $\delta_N=MN^{-2}<1$.
On
$$
\Pi_2=\mbox{Range}(P_N)^{\perp_U} =
\bigl\{\phi\in L^2\ \vert\
U_{11}\phi_1 + U_{12}\phi_2 = 0 \bigr\}\,,
$$
we compute for the indefinite quadratic form
\begin{eqnarray*}
[\phi,\phi] &=& \langle U_{11}\phi_1,\phi_1\rangle
             + \langle U_{12}\phi_2,\phi_1\rangle
             + \langle U_{21}\phi_1,\phi_2\rangle
             + \langle U_{22}\phi_2,\phi_2\rangle\\
&=& - \bigl\langle U_{21}U_{11}^{-1}U_{12} \phi_2,\phi_2\rangle
             + \langle U_{22}\phi_2,\phi_2\rangle\,.
\end{eqnarray*}
By Eq.~(\ref{eq:U2-negative}) of
Proposition~\ref{prop:U22}, the last term is negative
on the nullspace of $P_N$, and satisfies the bound
$$
D^2 U_{22}D^2 \le -\frac{1-\delta_N|||U|||}{2} (I-P_N)
$$
as quadratic forms.
To estimate the other summand, Lemma~\ref{lem:tail} yields
$$
||D^2U_{21}D^2||_{L^2\to L^2}
\le \frac{1}{2} |||(I-P)UP|||
\le \frac{\delta_N}{2} |||U|||\,,
$$
and analogously
$$
||D^2U_{12}D^2||_{L^2\to L^2}
\le\frac{\delta_N}{2}|||U|||\,.
$$
The middle factor is controlled with
Lemma~\ref{lem:inverse} by
$$
||\bigl(D^2P_NUP_ND^2\bigr)^{-1}||_{L^2\to L^2} \le
2\frac{1+M}{1-\delta_N^2|||U|||}\,.
$$
We arrive at
$$
D^2\bigl\{-U_{21}U_{11}^{-1}U_{12} + U_{22}\bigr\}D^2 \le
-\frac{1}{2} \bigl(1-\delta_N|||U||| -
\frac{\delta_N^2|||U|||^2 }{1-\delta_N^2|||U|||}(1+M)\,.
$$
as quadratic forms. Since $\delta_N^2|||U|||^2 (1+M)
< \bigl(1-\delta_N|||U|||\bigr)^2$ by assumption,
$U$ is negative definite
on $\Pi_2$.  It follows from Proposition~\ref{index}
that $\kappa(U)=\kappa(P_NUP_N)$, completing the proof.
\end{proof}

\section{Numerical examples}
\label{sec:numerical}

Before we look at examples, a few words about
how to verify the hypothesis on $N$ in
Eq.~(\ref{eq:N-condition1}) or
Eq.~(\ref{eq:N-condition2}).  The conditions involve the
solution of the partial
differential equation~\ref{eq:PDE-U}.
A useful consequence of Lemmas~\ref{lem:tail} and~\ref{lem:Uge1} is that
for $\delta_N:=MN^{_2}<1$,
\begin{equation}\label{eq:U-from-UN}
1\le |||U|||\le \frac{1}{1-\delta_N}\bigl(1+|||P_N K|||\bigr)\,.
\end{equation}
This follows by using the triangle inequality
$$
|||U_0+K |||\le 1 + |||P_NKP_N||| + |||K-P_NKP_N|||\,
$$
and solving for $P_NKP_N$ in Lemma~\ref{lem:tail}.

We propose two ways to estimate the size of $|||P_NUP_N|||$.

\begin{itemize}
\item Solve the partial differential equation~(\ref{eq:PDE-U})
by a Galerkin approximation, and use this solution
to compute, approximately, the value of $|||U|||$.
If Eq.~(\ref{eq:N-condition1}) is satisfied for some
value of $N$ much below the dimension of the
Galerkin approximation, we can
apply Proposition~\ref{prop:U22}, and restrict $U$ to
the subspace in Eq.~(\ref{eq:U-complement}).
A basis for this subspace can be computed by
using the Gram-Schmidt algorithm, with the inner product
replaced by the indefinite inner product
associated with $U$. If even Eq.~(\ref{eq:N-condition2})
can be satisfied, then we can just restrict $U$
to the range of $P_N$.

\item Start with a value of $N$
such that $\delta_N=MN^{-2}<1$. Write the matrix $P_NAP_N$ in the
Fourier representation, find its eigenvalues, and bring it into
triangular form. Solve Lyapunov's equation
$$
P_NA^*P_N \tilde U_N + \tilde U_N P_NAP_N=I\,
$$
for $U_N$.  In the Fourier representation, $U_N$ is a finite
matrix. Compute $\lambda_{max}$,
the largest eigenvalue of the matrix
$$
\bigl( (2+p^4+q^4)|U_N(p,q) -1|\bigr)_{|p|,|q|<N}\,.
$$
Then $(1+\lambda_{max})/(1-\delta_N)$ is our best
estimate for  $|||U|||$. If the condition in
Eq.~\ref{eq:N-condition2} holds with the current value of
$N$, we are satisfied and accept the value of $\kappa(A_N)$
as the instability index for $A$.  Else, we increase $N$
accordingly, and repeat the above steps.

\end{itemize}

\bigskip
Proposition~\ref{prop:main} reduces the computation
of the stability index of $A$ to a
finite-dimensional linear algebra problem.
This is illustrated in
Fig.~1 for the particular example the operator $A$ from
\cite{Benilov3}, see Eq.~(\ref{eq:operator}).
In this example, we have
$$
a(x) = 1 + \frac{\alpha_2}{\alpha_3}\sin x\,,
\quad
b(x)= \frac{1-(\alpha_1+\alpha_2)\cos x}{\alpha_3}\,,\quad
c(x) = 0\,.
$$
In place of the constant in Eq.~(\ref{eq:def-M})
we use the slightly smaller value
$$
\tilde M= \sum_{p=-\infty}^\infty
\bigl(|\hat a(p)| +|\hat b(p| +|\hat c(p)|\bigr)
= 2 + \frac{1+\alpha_1+2\alpha_2}{\alpha_3}\, .
$$

The results of our computations are shown in Figure~1.
We see that if the parameter $\alpha_3$ is small,
then the surface tension is not strong enough to overcome
the gravity and the model is unstable with the number of
the unstable eigenvalues growing as the parameter $\alpha_3$ decreases.

\begin{figure}
\begin{center}
\includegraphics[height=5.5cm]{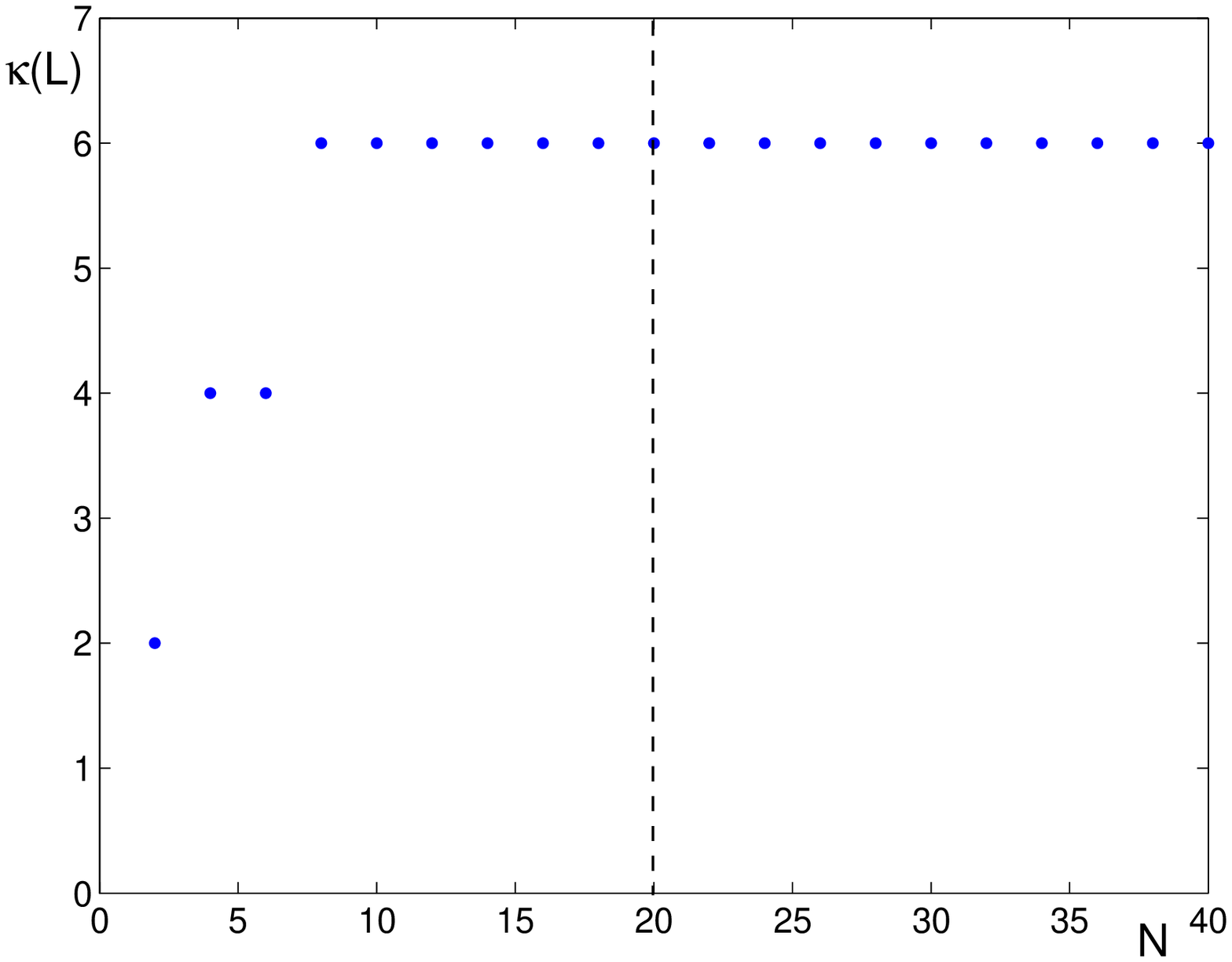}
\includegraphics[height=5.5cm]{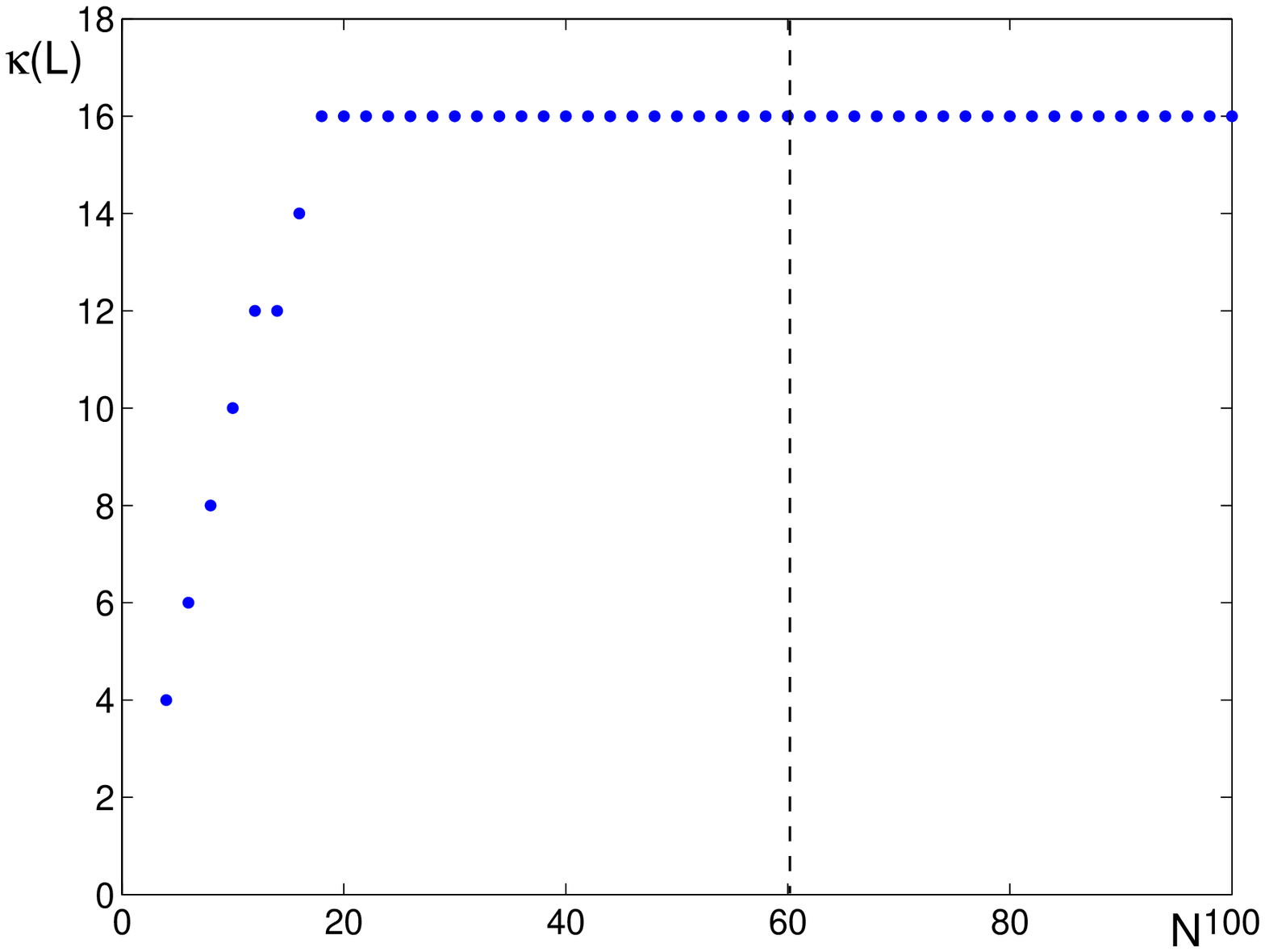}
\end{center}
\caption{On the left the parameters:
  $\alpha_1= 0.0$, $ \alpha_2 = 1$ and
  $\alpha_3 = 0.02$, resulting in $k\approx 190$;
on the right: $\alpha_1 = 0.0$, $\alpha_2 = 1$ and $ \alpha_3 = 0.002$,
resulting in $k \approx 1875$.
The dashed line is showing the suggested cut-off for the dimension
of the finite dimensional subspace which is based on the above
estimations.} \label{fig1}
\end{figure}

\end{document}